\documentclass[
  a4paper, 
  reqno, 
  oneside, 
  11pt
]{amsart}

\usepackage[utf8]{inputenc}

\usepackage[dvipsnames]{xcolor} 
\colorlet{cite}{LimeGreen!50!Green}
\usepackage{tikz}  
\usetikzlibrary{arrows,positioning}
\tikzset{ 
  baseline=-2.3pt,
  text height=1.5ex, text depth=0.25ex,
  >=stealth,
  node distance=2cm,
  mid/.style={fill=white,inner sep=2.5pt},
}

\usepackage{lmodern}
\usepackage{tikz-cd}
\usepackage{amsthm, amssymb, amsfonts}

\usepackage{amsthm, amssymb, amsfonts}	
\usepackage[all]{xy}
\usepackage{graphicx,caption,subcaption}
\usepackage{braket}  
\usepackage{microtype}
\theoremstyle{plain}
\newtheorem{theorem}{Theorem}[section]
\newtheorem{lemma}[theorem]{Lemma}
\newtheorem{proposition}[theorem]{Proposition}

\theoremstyle{definition}

\newtheorem{example}[theorem]{Example}

\newtheorem{remark}[theorem]{Remark}

\usepackage[%
  bookmarks=true,			
  unicode=true,			
  pdftitle={Notes on flat pseudo-Riemannian manifolds},		%
  pdfauthor={Valencia},	%
  pdfkeywords={Flat affine strcture}{Flat pseudo-metrics}{Flat pseudo-Riemannian Lie groups}{Orthogonal Lie groups}{Orthogonal Lie algebras},	
  colorlinks=true,		
  linkcolor=Red,			
  citecolor=cite,		
  filecolor=magenta,		
  urlcolor=RoyalBlue			
]{hyperref}				
\usepackage{cleveref}

\newtheoremstyle{mydef}
  {}		
  {}		
  {}		
  {}		
  {\scshape}	
  {. }		
  { }		
  {\thmname{#1}\thmnumber{ #2}\thmnote{ #3}}

 \DeclareMathAlphabet{\mathpzc}{OT1}{pzc}{m}{it}
 \usepackage{tikz}

 \numberwithin{equation}{section}
 
 \author{ Fabricio Valencia}
 \address{Fabricio Valencia - Instituto de Matem\'aticas, Universidad de Antioquia, Medell\'in, Colombia. E-mail:
 fabricioyarro@gmail.com}
 \title{Notes on flat pseudo-Riemannian manifolds}
 
 \keywords{Flat affine strcture, Flat pseudo-metrics, Flat pseudo-Riemannian Lie groups, Orthogonal Lie groups, Orthogonal Lie algebras.}
 \subjclass[2010]{52C20, 22E60, 53A15.}
 \date{\today}

 \begin{document}
 	\maketitle
 	\begin{abstract}
 		In these notes we survey basic concepts of affine geometry and their interaction with Riemannian geometry. We give a characterization of affine
 		manifolds which has as counterpart those pseudo-Riemannian manifolds whose Levi-Civita connection is flat. We show that no connected
 		semisimple Lie group admits a left invariant 
flat affine connection. We
 		characterize 
flat pseudo-Riemannian Lie groups. For a 
flat left-invariant
 		pseudo-metric on a Lie group, we show the equivalence between the completeness of the Levi-Civita connection and unimodularity of the group. We emphasize the case of 
flat left invariant hyperbolic metrics on the
 		cotangent bundle of a simply connected 
flat affine Lie group. We also
 		discuss Lie groups with bi-invariant pseudo-metrics and the construction
 		of orthogonal Lie algebras.
 	\end{abstract}
\tableofcontents
\section{Introduction}
A real smooth manifold $M$ of dimension $n$ is called an
{\bf affine manifold} if it admits a maximal atlas whose change of coordinates 
are restrictions of affine transformations of $\mathbb{R}^n$.
Having an affine structure over $M$ is equivalent to having a flat and torsion free linear connection $\nabla$ on $TM$
(see Theorem \ref{Characterization1}).
A pair $(M,\nabla)$, where $\nabla$ is a flat affine connection
(i.e. $\nabla$ is a flat and torsion free linear connection) on $M$,
is called a {\bf flat affine manifold}.
When $M=G$ is a Lie group and $\nabla$ is a
left invariant flat affine connection,
the pair $(G,\nabla)$ is called a {\bf flat affine Lie group}.
If $g$ is a pseudo-metric on $M$
(respectively $\mu$ is a left invariant pseudo-metric on $G$)
such that the Levi-Civita connection associated to $g$ has vanishing  curvature tensor,
the  pair $(M,g)$ (respectively $(G,\mu)$) is called a 
flat pseudo-Riemannian manifold (respectively flat pseudo-Riemannian Lie group).

These notes are organized as follows.
The first two sections are devoted to the study of flat affine manifolds.
Theorem \ref{characterizatioFALIC} is essential because
it gives a characterization of flat affine Lie groups that we will use throughout
these notes.
We show that no connected semisimple real Lie group admits a
left invariant flat affine connection (Theorem \ref{Semisimplenoaffine}).
In Section 3 we introduce some basic concepts of Riemannian geometry
and exhibit  examples of flat affine structures compatible with pseudo-metrics.
Section 4 is dedicated to the study of flat pseudo-Riemannian Lie groups.
We give a characterization of such Lie groups and we show that 
the left-invariant affine structure defined by the Levi-Civita connection is
geodesically complete if and only if
the group is unimodular (Theorem \ref{unimodularcompletes}).
We also show that the cotangent bundle of a simply connected
flat affine Lie group is endowed with an affine
Lie group structure and a left invariant flat hyperbolic metric
(Proposition \ref{AuMcotangent}).
In the Section 7 we study {\bf orthogonal Lie groups},
that is, Lie groups endowed with bi-invariant metrics.
To study properties of orthogonal Lie groups we  introduce
the notion of {\bf orthogonal Lie algebra}, which
will be used in the method of {\bf double orthogonal extension}.
As an application, we  describe how to construct the oscillator Lie algebra of the oscillator Lie group which
appear in several branches of Physics and Mathematical-Physics
and give rise to particular solutions of the Einstein-Yang-Mills equations.
Finally, we present another characterization of  flat  Riemannian Lie
groups using some consequences of the presence of an orthogonal structure in a Lie algebra (Theorem \ref{MilnorflatTheorem}).

\section{Flat affine manifolds}
In what follows  $M$ will denote a  connected paracompact real smooth manifold of dimension  $n$. 
We will denote by  $\mathfrak{X}(M)$ the Lie algebra of smooth vector fields over $M$ and by $C^{\infty}(M)$ 
the associative algebra of functions on $M$ with values in $\mathbb{R}$.

The objects of study of these notes are flat affine paracompact manifolds. 
In particular, we study flat affine structures that are compatible with pseudo-Riemannian metrics. 
A good understanding of the category of Lagrangian submanifolds requires 
a good knowledge of the category of flat affine manifolds (see \cite[Thm 7.8]{W}). 
Also,  flat affine manifolds with holonomy reduced to $\text{Gl}(n,\mathbb{Z})$ appear naturally in integrable
systems and Mirror symmetry (see \cite{KS}). 
Further applications of  flat affine manifolds appear in the study of 
Hessian structures and Information Geometry (see \cite[c.\thinspace 6]{Sh}).

Let $V$ be a real finite dimensional vector space. 
The space of {\bf affine transformations of $V$} is the  
Lie group $\text{Aff}(V)=V\rtimes_{Id} \text{GL}(V)$ determined by 
the semi-direct product of the Abelian Lie group  $(V,+)$ with the  Lie group  $\text{GL}(V)$   
via the identity representation. 
Its Lie algebra is the product vector space $\mathfrak{aff}(V)=V\rtimes_{id}\mathfrak{gl}(V)$ with   
Lie bracket given by 
$$[(x,t),(y,s)]=(t(y)-s(x),[t,s]_{\mathfrak{gl}(V)}),\quad$$ 
for all $x,y\in V$ and $t,s\in\mathfrak{gl}(V)$.

We say that $M$ {\bf admits an affine structure} 
if there exists an maximal atlas $\lbrace (U_\alpha,\varphi_\alpha) \rbrace_{\alpha\in J}$ 
of $M$ having change of coordinates  that are restrictions of affine transformations of  
$\mathbb{R}^n$, that is, for each $\alpha,\beta\in J$ with $U_\alpha\cap U_\beta\neq \emptyset$, 
there exists $\sigma_{\alpha\beta}\in\text{Aff}(\mathbb{R}^n)$ such that
$$\left.\varphi_\beta\circ \varphi_\alpha^{-1}\right|_{\varphi_\alpha(U_\alpha\cap U_\beta)}= 
\left.\sigma_{\alpha\beta}\right|_{\varphi_\alpha(U_\alpha\cap U_\beta)}.$$

If $G$ is a discrete  Lie subgroup of  $\text{Aff}(\mathbb{R}^n)$ 
that acts freely and  properly discontinuously over $\mathbb{R}^n$, 
then the quotient manifold  $\mathbb{R}^n/G$ admits an affine structure 
such that the coordinate changes  are restrictions of elements of   $G$ (see \cite[p. 349]{R}).

\begin{example} Let $S^1=\{z\in \mathbb{C}: |z|=1\}$, $U_1=S^1-\lbrace (1,0)
	\rbrace$ and $U_2=S^1-\lbrace (0,1)
	\rbrace$. If $\varphi_1: U_1 \to (0,2\pi)$ and $\varphi_2: U_2 \to \left( -\dfrac{\pi}{2},3\dfrac{\pi}{2}\right)$ 
	are defined respectively by
	$$z \mapsto \text{arg}(z)\quad\text{and}\quad z \mapsto \left\{ \begin{array}{lcc}
	\text{arg}(z)-\dfrac{\pi}{2},&   \text{if}  & \text{arg}(z)\in \left( \dfrac{\pi}{2},2\pi\right) \\
	\\ \text{arg}(z)+3\dfrac{\pi}{2},&  \text{if} & \text{arg}(z)\in \left( 0,\dfrac{\pi}{2}\right)
	\end{array},
	\right.$$
	then the atlas $\lbrace (U_1,\varphi_1), (U_2,\varphi_2) \rbrace$ 
	determines an affine structure for $S^1$.
\end{example}
\begin{example}
	\emph{Hopf manifolds}. Let $\lambda>1$ be a fixed real number. 
	Denote by  $G$ the group of transformations of  
	$\mathbb{R}^n\backslash\lbrace 0\rbrace$ defined by
	\begin{align*}
	T_n: \mathbb{R}^n\backslash\lbrace 0\rbrace &\to \mathbb{R}^n\backslash\lbrace 0\rbrace\\
	x &\mapsto T_n(x)=\lambda^n\cdot x,
	\end{align*}
	for all  $n\in \mathbb{Z}$. 
	The set $G$ is a discrete subgroup of  $\text{Aff}(\mathbb{R}^n)$ 
	that acts freely and properly discontinuously  over 
	$\mathbb{R}^n\backslash\lbrace 0\rbrace$. Therefore $\mathbb{R}^n\backslash\lbrace 0\rbrace/G$ 
	is an  affine manifold called a {\bf Hopf manifold} which we will denote by  $\mathrm{Hopf}(\lambda,n)$. 
	Topologically these manifolds are either the disjoint union of 
	two Hopf circles $\mathbb{R}^+/G$, when $n=1$, or diffeomorphic to  $S^{n-1}\times S^1$ when $n>1$.
\end{example}

Recall that a  {\bf linear connection} on a smooth manifold $M$ is an  
$\mathbb{R}$-bilinear map 
$\nabla: \mathfrak{X}(M)\times \mathfrak{X}(M) \to \mathfrak{X}(M)$ 
that is  $C^\infty(M)$-linear on the first component and satisfies 
$$\nabla_XfY=X(f)Y+f\nabla_XY,$$ 
for all $X,Y\in\mathfrak{X}(M)$ and $f\in C^{\infty}(M)$.
The {\bf torsion tensor}  $T_\nabla$  and  {\bf curvature tensor}  $R_\nabla$ associated to a linear connection  $\nabla$ are defined respectively by
\begin{equation}\label{torsion}
T_\nabla(X,Y)=\nabla_XY-\nabla_YX-[X,Y]
\end{equation}
and
\begin{equation}\label{curvature}
R_\nabla(X,Y)Z=\nabla_X\nabla_YZ-\nabla_Y\nabla_XZ-\nabla_{[X,Y]}Z,
\end{equation}
for all $X,Y,Z \in\mathfrak{X}(M)$. When  $T_\nabla=0$ and $R_\nabla= 0$ we say that $\nabla$ is a  
{\bf flat affine connection} and the pair $(M,\nabla)$ is called a {\bf flat affine manifold}.
\begin{remark}
	The pair $(M,\nabla)$ is a flat affine manifold  if and only if there exists an  
	atlas for $M$ such that the  Christoffel symbols
	associated to  $\nabla$ vanish identically on all charts  (see \cite[p.\thinspace 108]{C}).
\end{remark}

\begin{example} If $(x^1,\cdots,x^n)$  are the usual coordinates in 
	$\mathbb{R}^n$, the {\bf usual linear connection} $\nabla^0$ on $\mathbb{R}^n$ 
	is defined as
	$$\nabla^0_XY=\sum_{j=1}^{n}X(f^j)\dfrac{\partial}{\partial x^j},\quad\text{where}\quad Y=\sum_{j=1}f^j\dfrac{\partial}{\partial x^j}.$$
	It is simple to verify that $\nabla^0$ is a flat affine connection on 
	$\mathbb{R}^n$ and that we have $\Gamma_{ij}^k=0$ for all $i,j,k=1,\cdots,n$.
\end{example}

From now on, by smooth manifolds we mean real manifolds that are $C^\infty$ differentiable. 
The following characterization of affine manifolds appears in  \cite{AM}.

\begin{theorem}[Auslander-Markus]\label{Characterization1}
	A real smooth manifold $M$ has an affine structure if and only if  
	there exists a flat affine connection on $M$.
\end{theorem}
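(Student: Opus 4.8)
The plan is to establish both implications by reducing them to a single geometric fact: a diffeomorphism between open subsets of $\mathbb{R}^n$ is the restriction of an element of $\text{Aff}(\mathbb{R}^n)$ if and only if it preserves the usual connection $\nabla^0$, which in turn happens if and only if all its second-order partial derivatives vanish. This ``affine $\Leftrightarrow$ zero Hessian $\Leftrightarrow$ preserves $\nabla^0$'' triangle is what links the atlas condition to the connection condition.

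For the implication ``affine structure $\Rightarrow$ flat affine connection'', I would begin with a maximal affine atlas $\lbrace(U_\alpha,\varphi_\alpha)\rbrace_{\alpha\in J}$ and transport $\nabla^0$ to $M$. On each $U_\alpha$ let $\nabla^\alpha\ce(\varphi_\alpha)^*\nabla^0$ be the unique connection making $\varphi_\alpha\colon(U_\alpha,\nabla^\alpha)\to(\varphi_\alpha(U_\alpha),\nabla^0)$ connection-preserving; in the coordinates $\varphi_\alpha$ its Christoffel symbols vanish. The crucial step is that $\nabla^\alpha=\nabla^\beta$ on each overlap $U_\alpha\cap U_\beta$: since $\varphi_\beta\circ\varphi_\alpha^{-1}$ agrees with an affine map $\sigma_{\alpha\beta}$, whose linear part is constant, it carries $\nabla^0$ to $\nabla^0$, so the two pullbacks coincide. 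The local connections then glue to a global linear connection $\nabla$ that is locally modelled on $\nabla^0$; being locally of this form it satisfies $T_\nabla=0$ and $R_\nabla=0$, hence is flat affine.

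For the converse, I would invoke the Remark: a flat affine connection $\nabla$ admits an atlas $\lbrace(U_\alpha,\varphi_\alpha)\rbrace$ in which all Christoffel symbols vanish identically. It then suffices to show that this atlas is affine. Denoting by $(x^i)$ and $(y^i)$ the coordinates of two overlapping charts, the transformation rule for the Christoffel symbols reads
$$\tilde\Gamma^{\,k}_{ij}=\frac{\partial y^k}{\partial x^c}\,\frac{\partial x^a}{\partial y^i}\,\frac{\partial x^b}{\partial y^j}\,\Gamma^{c}_{ab}+\frac{\partial y^k}{\partial x^c}\,\frac{\partial^2 x^c}{\partial y^i\,\partial y^j}.$$
Since both $\Gamma^{c}_{ab}$ and $\tilde\Gamma^{\,k}_{ij}$ vanish and the Jacobian $(\partial y^k/\partial x^c)$ is invertible, we obtain $\partial^2 x^c/\partial y^i\partial y^j=0$ for all indices, so each transition map has vanishing Hessian and is thus affine, whence the atlas defines an affine structure on $M$.

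The main obstacle is not the bookkeeping with the transformation law, which is routine once the charts are in place, but rather the input supplied by the Remark in the converse direction. Producing coordinates in which the Christoffel symbols vanish amounts to constructing a local parallel frame for $\nabla$ and integrating it to a coordinate system: the existence of the parallel frame uses $R_\nabla=0$ (flatness makes parallel transport locally path-independent), while its integrability to honest coordinate vector fields uses $T_\nabla=0$. I would therefore treat that analytic heart as granted by the cited Remark and concentrate the written argument on the gluing in the forward direction and the index computation above.
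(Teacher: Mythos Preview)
Your argument is correct. The forward implication is essentially the paper's: both pull back $\nabla^0$ through the affine charts and check agreement on overlaps, you via the conceptual ``affine maps preserve $\nabla^0$'', the paper via the explicit transformation formula for the Christoffel symbols; these are the same computation.

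For the converse the routes differ. The paper builds its atlas geometrically: it uses the exponential map $\exp_p$ of $\nabla$ to produce normal coordinate charts, observes that in charts with vanishing Christoffel symbols geodesics satisfy $\ddot y^i=0$ and are therefore straight lines, and then invokes uniqueness of geodesics to force the transition maps between normal charts to be affine. You instead take the Remark (cited from Chern) as a black box supplying an atlas with identically vanishing $\Gamma^k_{ij}$, and read off the affineness of the transitions directly from the transformation law, exactly the same formula the paper uses in the forward direction. Your approach is more algebraic and symmetric (the same identity does the work in both directions), at the price of outsourcing all the analytic content to the Remark; the paper's approach makes the origin of the flat charts explicit via $\exp_p$, though its final passage from geodesic uniqueness to the vanishing of the second partials is left rather implicit. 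Either way the substance---$R_\nabla=0$ gives a parallel frame, $T_\nabla=0$ makes it holonomic---is the same, and you identify this correctly as the analytic heart.
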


\begin{proof}
	Suppose that $\lbrace (U_\alpha,\varphi_\alpha) \rbrace_{\alpha\in J}$ 
	is an affine structure for $M$. 
	For each $\alpha\in J$, we endow the open set  $\varphi_\alpha(U_\alpha)\subseteq \mathbb{R}^n$ with the usual linear connection $\nabla^0$. 
	The pullback of $\left.\nabla^0\right|_{\varphi_\alpha(U_\alpha)}$ 
	by the diffeomorphism $\varphi_\alpha$ defines a flat affine connection $\nabla_\alpha$ over $U_\alpha$.
	We choose  $\nabla$ over $M$ as the linear connection subjected to
	$\left.\nabla\right|_{U_\alpha}=\nabla_\alpha$ for all $\alpha\in J$. 
	To verify that $\nabla$ is well defined observe that, for 
	$\alpha,\beta\in J$ with $U_\alpha\cap U_\beta\neq \emptyset$, 
	setting $\varphi_\alpha=(x^1,\cdots,x^n)$ and $\varphi_\beta=(y^1,\cdots,y^n)$
	on  $U_\alpha\cap U_\beta$, we have
	\begin{equation}\label{Christoffelsymbols}
	(\Gamma_{jk}^i)_\alpha=\sum_{l=1}^n\dfrac{\partial^2 y^l}{\partial x^j\partial x^k}\dfrac{\partial x^i}{\partial y^l}+\sum_{l,m,q=1}^n(\Gamma_{mq}^l)_\beta\dfrac{\partial y^m}{\partial x^j}\dfrac{\partial y^q}{\partial x^k}\dfrac{\partial x^i}{\partial y^l}.
	\end{equation}
	Given that the  Christoffel symbols of  $\nabla^0$ vanish, 
	we obtain $(\Gamma_{mq}^l)_\beta=0$ on $U_\beta$. 
	Moreover we have $\dfrac{\partial^2 y^l}{\partial x^j\partial x^k}=0$, 
	since $\left.\varphi_\beta\circ \varphi_\alpha^{-1}\right|_{\varphi_\alpha(U_\alpha\cap U_\beta)}$
	is the restriction of an element of  
	$\text{Aff}(\mathbb{R}^n)$, and hence we obtain $(\Gamma_{jk}^i)_\alpha=0$, 
	showing that 
	$\nabla$ is well defined. 
	Furthermore, using equation \eqref{Christoffelsymbols} we can verify that  
	$\nabla$ is the unique flat affine connection that can be obtained in this fashion.
	
	Reciprocally, suppose that $\nabla$ is a flat affine connection on $M$. 
	For each $p\in M$ there exists a neighborhood $V_p$ of 0 in $T_p M$ and a neighborhood  
	$U_p$ of $p$ in $M$ such that the exponential map associated to $\nabla$, 
	denoted by $\exp_p\colon V_p\to U_p$,
	is a diffeomorphism (see \cite[p.\thinspace 148]{KN}). 
	Given a basis $\lbrace X_1,\cdots,X_n \rbrace$ for the tangent space $T_p M$, 
	we define local charts on $U_p$ by
	$$x^i\left(\exp_p\left(\sum_{j=1}^n a^jX_j \right)\right)=a^i,$$
	if $\sum_{j=1}^n a^jX_j\in V_p$ for all $ i=1,\cdots, n$.
	Since $\nabla$ is flat affine, there exists an atlas over $M$ with respect to which we have 
	$\Gamma_{ij}^{k}=0$ for every chart. 
	The computation of geodesic curves $\gamma$ in a chart $(U,(y^1,\cdots,y^n))$ 
	of such an atlas amounts to solving the system of ordinary differential equations  
	$\dfrac{d^2y^i(\gamma(t))}{dt^2}=0$ for $i=1,\cdots,n$, 
	whose solution, for a fixed initial condition $\left(p,\sum_{j=1}^n a^jX_j\right)\in TM$, is unique. 
	Therefore, setting $\dfrac{\partial^2 x*^l}{\partial x^j\partial x^k}=0$ 
	on each intersection, these normal coordinates $(U_p,(x^1,\cdots,x^n))$ 
	generate a unique  atlas over $M$ for which the changes of coordinates are 
	restrictions of elements of  $\text{Aff}(\mathbb{R}^{n})$.
\end{proof}

In general, determining whether a smooth manifold admits 
a flat affine structure or not is a difficult question, 
and there are obstructions for the existence of said structures. 

\begin{example} We list some manifolds that do not admit flat affine structures:
	\begin{itemize}
		\item Compact simply connected manifolds (see \cite{Eh}). 
		\item Compact manifolds with finite fundamental group (see \cite{AM}). 
		\item In particular for  $n>1$ the real $n$-sphere $S^n$, the real projective space  $\mathbb{RP}^n$ 
		and the group of rotations $O(n)^+$ do not admit flat affine structures. 
	\end{itemize}
\end{example}
Further topological obstructions for the existence of a flat affine structures are listed in \cite{Sm}.

\begin{remark}
	There is no direct relation between the notion of  
	affine variety as given in algebraic geometry (namely a set cut out  by polynomial equations) 
	and the definition of affine manifold in the way that we present it (when a manifold admits an affine structure). 
	For example, for $n>1$ the  $n$-dimensional real sphere $S^n$ 
	is an affine algebraic variety but is not a flat affine manifold in the sense of our definition.
\end{remark}

\section{Flat affine Lie groups}
In what follows $G$ denotes a connected real Lie group. For each $\sigma\in G$, we denote by $L_\sigma:G\to G$ the map left multiplication by $\sigma$ in $G$, that is, the map defined by $\tau\mapsto L_\sigma(\tau)=\sigma\tau$. The tangent space $T_\epsilon G$ of $G$ at the identity and the Lie algebra of left invariant vector fields $\mathfrak{X}_{l}(G)$ on $G$ are isomorphic vector spaces as follows. For each $x\in T_\epsilon G$, we associate the left invariant vector field $x^+$ defined by
$$x^+_\sigma=(L_\sigma)_{\ast,\epsilon}(x)=\left.\dfrac{d}{dt}\right|_{t=0}(\sigma\cdot \text{exp}_G(tx)),$$
for all $\sigma\in G$. Under this isomorphism we give a structure of Lie algebra to $\mathfrak{g}=T_\epsilon G$ and call it the Lie algebra of $G$.\\

A linear connection  $\nabla$ on $G$ is called {\bf left invariant} if $L_\sigma$ is an affine transformation of $(G,\nabla)$ for all $\sigma\in G$. More precisely, we must have
$$(L_{\sigma^{-1}})_\ast\left(\nabla_{(L_\sigma)_\ast X}(L_\sigma)_\ast Y\right)=\nabla_XY,$$
for all $X,Y\in\mathfrak{X}(G)$ and $\sigma\in G$. From this definition it follows immediately that a connection 
$\nabla$ on $G$ is left invariant if and only if for all 
$x^+,y^+\in \mathfrak{X}_{l}(G)$ we have $\nabla_{x^+}y^+\in\mathfrak{X}_{l}(G)$.

\begin{lemma}\label{Leftinvariantconnection1}
	There exists a bijective correspondence between left invariant linear connections  
	on $G$ and bilinear maps on $\mathfrak{g}$.
\end{lemma}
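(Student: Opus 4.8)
The plan is to exhibit the bijective correspondence explicitly by restricting a left invariant connection to left invariant vector fields, and conversely extending a bilinear product to a full connection. First I would define the forward map: given a left invariant linear connection $\nabla$ on $G$, I observe that by the characterization stated just above the lemma we have $\nabla_{x^+}y^+\in\mathfrak{X}_l(G)$ for all $x^+,y^+\in\mathfrak{X}_l(G)$. Since the evaluation $x^+\mapsto x^+_\epsilon=x$ identifies $\mathfrak{X}_l(G)$ with $\mathfrak{g}=T_\epsilon G$, this defines a bilinear product $a\colon\mathfrak{g}\times\mathfrak{g}\to\mathfrak{g}$ by the rule $a(x,y)\ce(\nabla_{x^+}y^+)_\epsilon$, equivalently $\nabla_{x^+}y^+=(a(x,y))^+$. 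Bilinearity over $\mathbb{R}$ is inherited from the $\mathbb{R}$-bilinearity of $\nabla$, so this is a well-defined bilinear map on $\mathfrak{g}$.

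For the reverse direction I would take an arbitrary bilinear map $a\colon\mathfrak{g}\times\mathfrak{g}\to\mathfrak{g}$ and reconstruct a left invariant connection. The key idea is that a left invariant connection is completely determined by its values on a global frame of left invariant vector fields. Fixing a basis $\lbrace e_1,\dots,e_n\rbrace$ of $\mathfrak{g}$, every $X\in\mathfrak{X}(G)$ can be written uniquely as $X=\sum_i f^i\, e_i^+$ with $f^i\in C^\infty(G)$, and similarly $Y=\sum_j g^j\, e_j^+$. Prescribing $\nabla_{e_i^+}e_j^+\ce(a(e_i,e_j))^+$ and then forcing the two defining axioms of a linear connection (the $C^\infty(G)$-linearity in the first slot and the Leibniz rule in the second), I obtain the forced formula
$$\nabla_X Y=\sum_{i,j} f^i\, X\!\left(g^j\right)\, e_j^+ \cdot\tfrac{1}{f^i}\ \text{--- more cleanly ---}\ \nabla_X Y=\sum_{j} X(g^j)\, e_j^+ + \sum_{i,j} f^i g^j\,(a(e_i,e_j))^+.$$
I would verify directly that this formula is $\mathbb{R}$-bilinear, is $C^\infty(G)$-linear in $X$, satisfies $\nabla_X(hY)=X(h)Y+h\nabla_X Y$, and is independent of the chosen basis; hence $\nabla$ is a genuine linear connection. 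That $\nabla$ sends pairs of left invariant fields to left invariant fields is immediate from the construction (on basis frames $\nabla_{e_i^+}e_j^+=(a(e_i,e_j))^+$ is left invariant and the coefficient functions become constants), so by the characterization recalled above $\nabla$ is left invariant.

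Finally I would check that the two constructions are mutually inverse. Starting from $\nabla$, passing to $a$, and reconstructing a connection returns the original $\nabla$ because both are left invariant and agree on the frame $\lbrace e_i^+\rbrace$, and a left invariant connection is determined by its values there. Starting from $a$, building $\nabla$, and restricting back to the frame recovers $a(e_i,e_j)=(\nabla_{e_i^+}e_j^+)_\epsilon$ by construction. This establishes the desired bijection. The main obstacle I anticipate is not any single hard step but the bookkeeping needed to confirm that the reconstructed $\nabla$ is well defined independently of the frame and genuinely satisfies the Leibniz axiom; once the formula $\nabla_X Y=\sum_j X(g^j)\,e_j^+ +\sum_{i,j} f^i g^j\,(a(e_i,e_j))^+$ is in hand, these are routine but must be carried out carefully.
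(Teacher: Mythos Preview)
Your proposal is correct and follows essentially the same approach as the paper: both use that left invariant vector fields trivialize $TG$ (the paper phrases this as ``absolute parallelism,'' you make it explicit via a basis $\{e_i^+\}$), then extend the prescription $\nabla_{x^+}y^+=(a(x,y))^+$ to arbitrary vector fields using $C^\infty(G)$-linearity and the Leibniz rule. Your added verification that the two constructions are mutually inverse is a welcome detail the paper leaves implicit; the only cosmetic issue is the garbled intermediate expression before ``more cleanly,'' which you should simply delete.
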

\begin{proof}
	If $\nabla$ is a left invariant linear connection on $G$, 
	then the assignment $\cdot: \mathfrak{g}\times\mathfrak{g} \to \mathfrak{g}$, 
	given by $(x,y) \mapsto x\cdot y=(\nabla_{x^+}y^+)(\epsilon)$ for all  
	$x,y\in \mathfrak{g}$, defines a bilinear map on  $\mathfrak{g}$. 
	Conversely, suppose that $\cdot:\mathfrak{g}\times\mathfrak{g}\to \mathfrak{g}$ 
	is a bilinear map on $\mathfrak{g}$. Define $\nabla$ on $G$ 
	as the linear connection such that $\nabla_{x^+}y^+=(x\cdot y)^+$
	verifies
	\begin{equation}\label{leftinv1}
	\nabla_{fx^+}y^+=f.(x\cdot y)^+ \qquad \text{and} \qquad\nabla_{x^+}fy^+=x^+(f)y^++f.(x\cdot y)^+,
	\end{equation}
	for all $x,y\in\mathfrak{g}$ and $f\in C^\infty(G)$.
	Since the left invariant vector fields determine an absolute parallelism 
	over $G$, we have that $\mathfrak{X}_{l}(G)$ generates  
	$\mathfrak{X}(G)$ as a $C^\infty(G)$-module. 
	Hence, each smooth vector field over  $G$ can be written as a  $C^\infty(G)$-linear
	combination of  left invariant vector fields. 
	Using this fact together with the identities exhibited in  \eqref{leftinv1} 
	we can easily conclude that  $\nabla$ is a left invariant linear connection on  $G$.
\end{proof}
When there exists a left invariant flat affine connection $\nabla$ on   
$G$, the pair $(G,\nabla)$ is called  a {\bf flat affine  Lie group}. To characterize flat affine  Lie groups and to study their 
structure is an open problem which was proposed by   J. Milnor in \cite{M}. 
The following characterization of flat affine  Lie groups was given in  \cite{K} and \cite{Me}.

\begin{theorem}[Koszul and Medina]\label{characterizatioFALIC}
	Let $G$ be a connected  $n$-dimensional real Lie group, 
	$\mathfrak{g}$ its  Lie algebra and $\widetilde{G}$ its universal covering Lie group. 
	Then, the following are equivalent.
	\begin{enumerate}
		\item There exists a left invariant flat affine connection on $G$.
		\item There exists a bilinear map $\cdot:\mathfrak{g}\times\mathfrak{g}\to \mathfrak{g}$ on $\mathfrak{g}$ such that
		\begin{equation}\label{compatiblebracket}
		[x,y]=x\cdot y-y\cdot x
		\end{equation}
		and
		\begin{equation}\label{leftsymmetricproduct}
		L_{[x,y]}=[L_x,L_y]_{\mathfrak{gl}(\mathfrak{g})},
		\end{equation}
		for all $x,y\in\mathfrak{g}$, here $L_x:\mathfrak{g}\to\mathfrak{g}$ is the map defined by 
		$y\mapsto L_x(y)=x\cdot y$. 
		\item There exists a real $n$-dimensional vector space $V$ and a  
		Lie group homomorphism $\rho\colon \widetilde{G}\to \text{Aff}(V)$ 
		such that the left action of   $\widetilde{G}$ over $V$ defined by 
		$\sigma\cdot v=\rho(\sigma)(v)$ for all $(\sigma,v)\in \widetilde{G}\times V$, 
		allows a point having  open orbit and discrete isotropy.
	\end{enumerate}
\end{theorem}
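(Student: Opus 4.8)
The plan is to prove the two equivalences $(1)\Leftrightarrow(2)$ and $(2)\Leftrightarrow(3)$, using Lemma \ref{Leftinvariantconnection1} as the bridge for the first. For $(1)\Leftrightarrow(2)$, recall that Lemma \ref{Leftinvariantconnection1} identifies a left invariant connection $\nabla$ with the bilinear product $x\cdot y=(\nabla_{x^+}y^+)(\epsilon)$ on $\mathfrak{g}$, so that $\nabla_{x^+}y^+=(x\cdot y)^+$. Since $T_\nabla$ and $R_\nabla$ are tensors and $\mathfrak{X}_{l}(G)$ generates $\mathfrak{X}(G)$ as a $C^\infty(G)$-module, $\nabla$ is flat affine if and only if its torsion and curvature vanish on left invariant vector fields. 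First I would evaluate \eqref{torsion} on $x^+,y^+$: using $[x^+,y^+]=[x,y]^+$ this reads $(x\cdot y-y\cdot x-[x,y])^+=0$, which is precisely \eqref{compatiblebracket}. Next I would evaluate \eqref{curvature} on $x^+,y^+,z^+$; since $\nabla_{x^+}(y\cdot z)^+=(x\cdot(y\cdot z))^+$ and $\nabla_{[x,y]^+}z^+=([x,y]\cdot z)^+$, the vanishing of the curvature becomes $x\cdot(y\cdot z)-y\cdot(x\cdot z)-[x,y]\cdot z=0$, which is exactly $[L_x,L_y]_{\mathfrak{gl}(\mathfrak{g})}=L_{[x,y]}$, i.e. \eqref{leftsymmetricproduct}.

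For $(2)\Rightarrow(3)$ I would take $V=\mathfrak{g}$ and define $\theta\colon\mathfrak{g}\to\mathfrak{aff}(V)$ by $\theta(x)=(x,L_x)$. Computing inside $\mathfrak{aff}(V)$ gives $[\theta(x),\theta(y)]=(L_x(y)-L_y(x),[L_x,L_y]_{\mathfrak{gl}(V)})=([x,y],L_{[x,y]})=\theta([x,y])$, where the translation coordinate uses \eqref{compatiblebracket} and the linear coordinate uses \eqref{leftsymmetricproduct}; hence $\theta$ is a Lie algebra homomorphism. As $\widetilde{G}$ is simply connected, $\theta$ integrates to a unique Lie group homomorphism $\rho\colon\widetilde{G}\to\text{Aff}(V)$ with $d\rho_\epsilon=\theta$. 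I would then examine the orbit of $0\in V$: the differential at $\epsilon$ of the orbit map $\sigma\mapsto\rho(\sigma)(0)$ sends $x$ to the value at $0$ of the fundamental vector field of $\theta(x)$, namely $x+L_x(0)=x$, so it is the identity of $V=\mathfrak{g}$. Being an isomorphism, it forces the orbit of $0$ to be open and the isotropy Lie algebra to be zero, whence the isotropy subgroup is discrete.

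For $(3)\Rightarrow(2)$ I would differentiate to obtain the Lie algebra homomorphism $\theta=d\rho_\epsilon\colon\mathfrak{g}\to\mathfrak{aff}(V)$, writing $\theta(x)=(q(x),L_x)$ with linear maps $q\colon\mathfrak{g}\to V$ and $L\colon\mathfrak{g}\to\mathfrak{gl}(V)$. Conjugating $\rho$ by the translation carrying the distinguished point to $0$ preserves both the openness of the orbit and the discreteness of the isotropy, so I may assume that the special point is $0$; then the differential of the orbit map at $\epsilon$ is exactly $q$. Open orbit together with $\dim\mathfrak{g}=\dim V$ makes $q$ surjective and discrete isotropy makes $q$ injective, so $q$ is a linear isomorphism. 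Transporting the structure through $q$, I would set $x\cdot y=q^{-1}(L_x(q(y)))$, equivalently $\widetilde{L}_x=q^{-1}L_xq$ for the product's left multiplications. The translation part of $[\theta(x),\theta(y)]=\theta([x,y])$ gives $L_xq(y)-L_yq(x)=q([x,y])$, which upon applying $q^{-1}$ yields \eqref{compatiblebracket}, while conjugating the relation $[L_x,L_y]=L_{[x,y]}$ by $q$ yields \eqref{leftsymmetricproduct}.

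The step I expect to be the main obstacle is the careful bookkeeping underlying $(2)\Leftrightarrow(3)$: correctly identifying the infinitesimal affine action, checking that the differential of the orbit map at $\epsilon$ based at $0$ is precisely the translation part $q$, and establishing the exact equivalence between the étale condition (open orbit and discrete isotropy) and $q$ being an isomorphism, which hinges on the dimension equality $\dim\mathfrak{g}=\dim V=n$. The passage from the infinitesimal homomorphism $\theta$ to the group homomorphism $\rho$ is standard but genuinely uses the universal covering, which accounts for the appearance of $\widetilde{G}$ in the statement.
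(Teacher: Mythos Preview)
Your proposal is correct and follows essentially the same route as the paper: the bijection of Lemma~\ref{Leftinvariantconnection1} reduces $(1)\Leftrightarrow(2)$ to evaluating torsion and curvature on left invariant fields, and $(2)\Leftrightarrow(3)$ is handled via the Lie algebra homomorphism $\theta(x)=(x,L_x)$ into $\mathfrak{aff}(\mathfrak{g})$ together with the observation that the differential of the orbit map at the identity is a linear isomorphism. The only cosmetic difference is that for $(3)\Rightarrow(2)$ you first conjugate $\rho$ by a translation so as to take the distinguished point to $0$ and then identify the isomorphism with $q$, whereas the paper keeps the general base point $v$ and works with $\psi_v(x)=q(x)+f_x(v)$ directly; these are the same argument up to that normalization.
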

\begin{proof}
	We first show that $1$ implies $2$. Let $\nabla$ be a left invariant flat affine connection on $G$. 
	By Lemma \ref{Leftinvariantconnection1} we have that  
	$L_x(y)=x\cdot y=(\nabla_{x^+}y^+)(\epsilon)$ defines a bilinear map on $\mathfrak{g}$. 
	Substituting this equality into the formulas of torsion and curvature  
	\eqref{torsion}-\eqref{curvature} for $\nabla$, we obtain identities 
	\eqref{compatiblebracket} and \eqref{leftsymmetricproduct}, respectively. 
	
	To get  $2$ implies $3$, suppose that there exists a bilinear map 
	$\cdot:\mathfrak{g}\times\mathfrak{g}\to \mathfrak{g}$ on 
	$\mathfrak{g}$ satisfying \eqref{compatiblebracket} and \eqref{leftsymmetricproduct}, 
	where $L_x:\mathfrak{g}\to\mathfrak{g}$ is the linear map defined by  
	$y\mapsto L_x(y)=x\cdot y$, for all  $x,y\in\mathfrak{g}$. 
	Then the map  $\theta: \mathfrak{g} \to \mathfrak{aff}(\mathfrak{g})$, 
	defined by $x \mapsto (x,L_x)$, is a well defined  Lie algebra homomorphism. 
	This follows from the fact that \eqref{compatiblebracket} and 
	\eqref{leftsymmetricproduct} imply that the map 
	$L:\mathfrak{g}\to\mathfrak{gl}(\mathfrak{g})$, defined by $x\mapsto L_x$, 
	is a well defined  Lie algebra homomorphism which satisfies
	$[x,y]=L_x(y)-L_y(x)$ for  all $x,y\in\mathfrak{g}$. 
	On the other hand, for $0\in\mathfrak{g}$ the map 
	$\psi_0: \mathfrak{g} \to \mathfrak{g}$ given by $x \mapsto x+L_x(0)=x$ is a linear isomorphism. 
	Thus, by means of the exponential map of $G$, we obtain a homomorphism of  
	Lie groups $\rho: \widetilde{G} \to \text{Aff}(\mathfrak{g})$ given by 
	$\sigma \to (Q(\sigma),F_\sigma)$, where for $\sigma=\exp_G(x)$ we have
	$$Q(\sigma)=\sum_{k=1}^\infty \dfrac{1}{k!}(L_x)^{k-1}(x)\quad\text{and}\quad F_\sigma=\text{Exp}(L_x)=\sum_{k=0}^\infty \dfrac{1}{k!}(L_x)^k.$$
	Since  $\psi_0$ is surjective, the orbit of $0\in \mathfrak{g}$ 
	by the left action of  $\widetilde{G}$ over $\mathfrak{g}$, 
	defined by  $\sigma \cdot 0=Q(\sigma)+ F_\sigma(0)=Q(\sigma)$ for all  
	$\sigma\in\widetilde{G}$, is open. Moreover, by the injectivity 
	of $\psi_0$ it follows that the isotropy of $0\in\mathfrak{g}$ by the given action is dicrete. 
	The latter implies that the orbital map $\pi:\widetilde{G}\to \text{Orb}(0)$, 
	given by  $\sigma\mapsto Q(\sigma)$,  is a local diffeomorphism and hence a covering map (see \cite{Me}).
	
	Finally let us show that $3$ implies $1$. 
	Let $V$ be a real vector space of dimension $n$ and assume that there exists a   
	Lie group homomorphism $\rho:\widetilde{G} \to \text{Aff}(V)$, defined by  
	$\sigma \to (Q(\sigma),F_\sigma)$, 
	which admits a point  $v\in V$ with open orbit and discrete isotropy  
	for the action of $\widetilde{G}$ on the left over $V$ induced by
	$\rho$. The latter implies that the map $F:\widetilde{G}\to \text{GL}(V)$ 
	defined by $\sigma\mapsto F_\sigma$, is a Lie group homomorphism and  
	$Q:\widetilde{G}\to V$, given by  $\sigma\mapsto Q(\sigma)$, is a smooth map that satisfies
	$$Q(\sigma\tau)=Q(\sigma)+F_\sigma(Q(\tau)),$$
	for all $\sigma,\tau\in\widetilde{G}$. Moreover, the orbital map $\pi:\widetilde{G}\to \text{Orb}(v)$ 
	given by $\sigma\mapsto Q(\sigma)+F_\sigma(v)$ is a local diffeomorphism. 
	Differentiating at the identity of  $\widetilde{G}$, we obtain the Lie algebra homomorphism 
	$\theta: \mathfrak{g} \to \mathfrak{aff}(V)$ given by  $x \mapsto (q(x),f_x)$ for all  
	$x\in\mathfrak{g}$, where the map $f:\mathfrak{g}\to \mathfrak{gl}(V)$ defined by  
	$x\mapsto f_x$, is a  Lie algebra homomorphism and   
	$q:\mathfrak{g}\to V$, given by $x\mapsto q(x)$, is the linear map
	\begin{equation}\label{1cocycle}
	q([x,y])=f_x(q(y))-f_y(q(x)),
	\end{equation}
	for all $x,y\in\mathfrak{g}$. Moreover, the map  $\psi_v: \mathfrak{g} \to V$ defined by 
	$x \mapsto q(x)+f_x(v)$ is a linear isomorphism. 
	Now, for each $x\in \mathfrak{g}$ we define
	$$L_x=\psi_v^{-1}\circ f_x\circ \psi_v.$$
	Since $f:\mathfrak{g}\to \mathfrak{gl}(V)$ is a Lie algebra homomorphism, 
	we have $L_{[x,y]}=[L_x,L_y]_{\mathfrak{gl}(\mathfrak{g})}$ 
	for all $x,y\in \mathfrak{g}$. On the other hand, since 
	$q:\mathfrak{g}\to V$ satisfies \eqref{1cocycle}, we conclude 
	$[x,y]=L_x(y)-L_y(x)$ for all $x,y\in \mathfrak{g}$. 
	Therefore, using Lemma \ref{Leftinvariantconnection1}, we obtain that the
	linear connection $\nabla$ defined by
	$$\nabla_{x^+}y^+=(x\cdot y)^+=(L_x(y))^+,$$ 
	for all $x,y\in\mathfrak{g}$, is a left invariant flat affine connection on $G$. 
	Using the linear isomorphism $\psi_v$, it can be easily drawn that 
	the Lie algebra homomorphisms in $\mathfrak{g}\to \mathfrak{aff}(V)$ 
	defined by $x\mapsto (x,L_x)$ and $x\to (q(x),f_x)$ 
	are isomorphic.
\end{proof}
\begin{example}
	\emph{Dimension 2.} Recall that the  Lie group of affine transformations 
	of the real line is given by the product manifold
	$\text{Aff}(\mathbb{R})=\mathbb{R}^*\times\mathbb{R}$, 
	with product $(a,b)\cdot (c,d)=(ac,ad+b)$. 
	Its Lie algebra is identified with the  vector space  
	$\mathfrak{aff}(\mathbb{R})=\text{Vect}_\mathbb{R}\lbrace e_1,e_2 \rbrace$ 
	with Lie  bracket $[e_1,e_2]=e_2$. 
	Next we introduce is a family of left invariant flat affine connections   
	on $\text{Aff}(\mathbb{R})$ which are not isomorphic. For $\alpha$ real, set
	$$\nabla_{e_1^+}e_1^+=\alpha e_1^+, \quad\nabla_{e_1^+}e_2^+=e_2^+,\quad\nabla_{e_2^+}e_1^+=\nabla_{e_2^+}e_2^+=0.$$
	Here $e_1^+=x\dfrac {\partial}{\partial x}$ and $e_2^+=x\dfrac {\partial}{\partial y}$ are the 
	left invariant vector fields associated to  $e_1$ and $e_2$, respectively. 
	A description of left invariant flat affine structures over 
	$\text{Aff}(\mathbb{R})$ can be found in  \cite{MSG}.
\end{example}
\begin{example}
	\emph{Dimension 3.} The {\bf Heisenberg} Lie group of dimension  $3$ 
	is given by the set of matrices
	$$H_3=\left\lbrace\begin{pmatrix}
	1 & x & z\\
	0 & 1 & y\\
	0 & 0 & 1 
	\end{pmatrix}:\ x,y,z\in \mathbb{R}\right\rbrace.$$
	The Lie algebra of $H_3$ is identified with 
	$\mathfrak{h}_3=\text{Vect}_\mathbb{R}\lbrace e_1,e_2,e_3 \rbrace$ with Lie  
	bracket $[e_1,e_2]=e_3$. The following is a left invariant flat affine connection on $H_3$:
	$$\nabla_{e_1^+}=\nabla_{e_3^+}=0, \quad\nabla_{e_2^+}e_1^+=-e_3,\quad\nabla_{e_2^+}e_2^+=e_1^+,\quad\nabla_{e_2^+}e_3^+=0.$$
	The vector fields $e_1^+=\dfrac{\partial}{\partial x}$, 
	$e_2^+=\dfrac{\partial}{\partial y}+x\dfrac{\partial}{\partial z}$, 
	and $e_3^+=\dfrac{\partial}{\partial z}$, denote the left invariant vector fields 
	associated to  $e_1$, $e_2$ and $e_3$, respectively. 
\end{example}

\begin{example}
	\emph{Dimension 4.} The product manifold $\mathbb{R}\ltimes_\rho\mathbb{R}^3$ 
	has the structure of a Lie group given by the  semidirect product of the 
	Abelian Lie group  $(\mathbb{R}^3,+)$ with  $(\mathbb{R},+)$ 
	via the  Lie group homomorphism
	\begin{align*}
	\rho: \mathbb{R} &\longrightarrow  \text{GL}(\mathbb{R}^3)\\
	t &\longmapsto \begin{pmatrix}
	e^t & 0 & 0\\
	0 & e^{-t} & 0 \\
	0 & 0 & 1
	\end{pmatrix}.
	\end{align*} 
	Next we introduce a family of  flat left invariant affine connections on 
	$\mathbb{R}\ltimes_\rho\mathbb{R}^3$:
	$$\nabla_{e_2^+}=\nabla_{e_3^+}=\nabla_{e_4^+}=0, \quad\nabla_{e_1^+}e_1^+=\alpha e_1^+,$$
	$$\nabla_{e_2^+}e_2^+=e_2^+,\quad\nabla_{e_1^+}e_3^+=-e_3^+,\quad\nabla_{e_1^+}e_4^+=0,$$
	for all $\alpha\in\mathbb{R}$. 
	The vector fields 
	$$e_1^+=\dfrac{\partial}{\partial t},\quad e_2^+=e^t\dfrac{\partial}{\partial x},\quad e_3^+=e^{-t}\dfrac{\partial}{\partial y},\quad e_4^+=\dfrac{\partial}{\partial z},$$
	determine a basis for $\mathfrak{X}_{l}(\mathbb{R}\ltimes_\rho\mathbb{R}^3)$.
\end{example}

Recall that a Lie group $G$ is called {\bf semisimple} 
if its Lie algebra decomposes into a direct sum of  simple  Lie algebras.
An interesting result, due to C. Chevalley and S. Eilenberg (see \cite{CE}) 
states that a Lie algebra $\mathfrak{g}$ is semisimple 
if and only if we have $H^1(\mathfrak{g},\theta)=0$
for every real representation $\theta$ of $\mathfrak{g}$  
over a finite dimensional vector space. 
Accordingly, we have the following beautiful result of   A. Bon-Yau Chu in \cite{BY}.

\begin{theorem}[Bon-Yau Chu]\label{Semisimplenoaffine}
	Let $G$ be a real  semisimple Lie group. Then $G$ does not admit a left invariant flat affine connection.
\end{theorem}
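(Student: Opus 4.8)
The plan is to argue by contradiction, feeding the algebraic data produced by Theorem \ref{characterizatioFALIC} into the Chevalley--Eilenberg cohomological criterion recalled above. Suppose $G$ is semisimple and admits a left invariant flat affine connection. By the implication $(1)\Rightarrow(2)$ of Theorem \ref{characterizatioFALIC} there is a bilinear product $\cdot$ on $\mathfrak{g}$ with $[x,y]=x\cdot y-y\cdot x$ as in \eqref{compatiblebracket}, and by \eqref{leftsymmetricproduct} the assignment $L\colon\mathfrak{g}\to\mathfrak{gl}(\mathfrak{g})$, $x\mapsto L_x$, is a Lie algebra homomorphism; that is, $(\mathfrak{g},L)$ is a finite dimensional real representation of $\mathfrak{g}$.

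The key observation is that the identity map $\mathrm{Id}_{\mathfrak{g}}$ is a $1$-cocycle of $\mathfrak{g}$ with values in this representation: rewriting \eqref{compatiblebracket} as $[x,y]=L_x(y)-L_y(x)$ says precisely that $\mathrm{Id}_{\mathfrak{g}}([x,y])=L_x(\mathrm{Id}_{\mathfrak{g}}(y))-L_y(\mathrm{Id}_{\mathfrak{g}}(x))$. Since $G$ is semisimple, the Chevalley--Eilenberg criterion gives $H^1(\mathfrak{g},L)=0$, so this cocycle is a coboundary; that is, there exists $v\in\mathfrak{g}$ with $L_x(v)=x\cdot v=x$ for every $x\in\mathfrak{g}$. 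Thus $v$ is a right identity for $\cdot$. Using \eqref{compatiblebracket} once more, for all $x$ we get $[v,x]=v\cdot x-x\cdot v=L_v(x)-x$, whence $L_v=\mathrm{Id}_{\mathfrak{g}}+\ad_v$.

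The contradiction then comes from a trace computation, which I expect to be the routine endgame rather than the main difficulty. On one hand, the linear functional $x\mapsto\tr(L_x)$ annihilates every bracket, since $\tr(L_{[x,y]})=\tr([L_x,L_y])=0$; as a semisimple Lie algebra is perfect, $\mathfrak{g}=[\mathfrak{g},\mathfrak{g}]$ and hence $\tr(L_x)=0$ for all $x$, in particular $\tr(L_v)=0$. On the other hand, $\tr(\ad_v)=0$ because $v\in[\mathfrak{g},\mathfrak{g}]$, so $\tr(L_v)=\tr(\mathrm{Id}_{\mathfrak{g}})=\dim\mathfrak{g}>0$. These two evaluations of $\tr(L_v)$ are incompatible, and this contradiction shows no such connection exists. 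The genuinely substantive step is the cohomological one: recognizing $\mathrm{Id}_{\mathfrak{g}}$ as a cocycle and invoking $H^1(\mathfrak{g},L)=0$ to manufacture a right identity, after which the perfectness of $\mathfrak{g}$ does the rest.
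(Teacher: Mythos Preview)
Your proof is correct and follows essentially the same route as the paper's: both recognize the identity map as a $1$-cocycle for the representation $L$, invoke $H^1(\mathfrak{g},L)=0$ from semisimplicity to produce an element $v$ (the paper calls it $z$) with $L_x(v)=x$, rewrite this as $L_v=\mathrm{Id}+\ad_v$, and take traces using perfectness of $\mathfrak{g}$ to reach a contradiction. The only cosmetic difference is that the paper states up front that every finite-dimensional representation of a semisimple Lie algebra has traceless image, whereas you derive $\tr(L_x)=0$ and $\tr(\ad_v)=0$ separately from $\mathfrak{g}=[\mathfrak{g},\mathfrak{g}]$.
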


\begin{proof} Let $G$ be a semisimple real  Lie group of dimension $n$ and $\mathfrak{g}$ its Lie algebra. 
	Since $\mathfrak{g}$ is semisimple, its derived ideal satisfies 
	$\mathfrak{g}=[\mathfrak{g},\mathfrak{g}]$. 
	This implies that every linear representation  $\theta$ of
	$\mathfrak{g}$ on a finite dimensional  vector space has trace $\text{tr}(\theta(x))=0$ 
	for all $x\in\mathfrak{g}$. 
	Suppose that there exists a left invariant flat affine connection $\nabla$ on $G$. 
	Then, by  Theorem \ref{characterizatioFALIC}, the map $L:\mathfrak{g} \to \mathfrak{gl}(\mathfrak{g})$ 
	defined by $x \mapsto L_x$, where $L_x:\mathfrak{g}\to \mathfrak{g}$ is the linear map given by
	$ y \mapsto L_x(y)=x\cdot y=(\nabla_{x^+}y^+)(\epsilon)$, 
	is a linear representation of $\mathfrak{g}$ on the vector space $\mathfrak{g}$. 
	We denote by $C^p(\mathfrak{g},L)$ and $H^p(\mathfrak{g},L)$ the spaces of  
	$p$-cochains and the $p$-th cohomology group of  $\mathfrak{g}$ 
	associated to the linear representation $L$, respectively. 
	We define  $\gamma\in C^1(\mathfrak{g},L)$ by $\gamma(x)=x$ for all 
	$x\in\mathfrak{g}$. Then, since $\nabla$ is torsion free and left invariant, we have
	$$\text{d}\gamma(x,y)=L_x(\gamma(y))-L_y(\gamma(x))-\gamma([x,y])=x\cdot y-y\cdot x-[x,y]=0,$$
	for all $x,y\in\mathfrak{g}$. 
	Therefore, we have $\text{d}\gamma= 0$. 
	Since $\mathfrak{g}$ is semisimple, we obtain $H^1(\mathfrak{g},L)=0$. 
	Hence, there exists $z\in C^0(\mathfrak{g},L)=\mathfrak{g}$ 
	such that $x=\gamma(x)=\text{d}z(x)=L_x(z)$ for all $x\in\mathfrak{g}$. 
	Once again, since the torsion tensor of $\nabla$ is null we reach
	$$x=L_x(z)=x\cdot z=z\cdot x-[z,x]=(L_z-\text{ad}_z)(x),$$
	which implies the relation $L_z=I+\text{ad}_z$, where $I$ and $\text{ad}$ 
	are the identity map and the adjoint representation of $\mathfrak{g}$, respectively. 
	Since $L$ and $\text{ad}$ are linear representations of $\mathfrak{g}$, we obtain
	$0=\text{tr}(L_z)=\text{tr}(I)+\text{tr}(\text{ad}_z)=\text{dim}(\mathfrak{g})=n,$
	which is a contradiction.
\end{proof}

\begin{example}
	The special  linear group  $\text{SL}(n,\mathbb{R})$, 
	the special orthogonal group $\text{SO}(n,\mathbb{R})$ and 
	the symplectic linear group $\text{Sp}(n,\mathbb{R})$ 
	do not allow a structure of flat affine Lie group, given that they are semisimple.
\end{example}

\section{Flat pseudo-Riemannian manifolds}
Our next objective is to study  left invariant flat affine structures over 
Lie groups in the case when these structures are compatible with a pseudo-Riemannian metric. 
To do so, we introduce the following structures from Riemannian geometry.
Let  $M$ be a smooth connected paracompact manifold of real dimension $n$. 
For each $p\in M$, we denote by 
$L^2(T_p M,\mathbb{R})$ the set of all bilinear maps $\beta:T_p M\times T_p M\to \mathbb{R}$. 
Recall that the {\bf index} $\nu$ of a symmetric bilinear form $\beta$ 
on a real finite-dimensional vector space $V$ is the largest integer
that is the dimension of a subspace $W\subset V$ on which $\beta|_W$ is negative definite. 
Equivalently, if $\beta$ is also non-degenerate, the index $\nu$ of $V$ 
is the number of $-1$ in the diagonal of the matrix representation of 
$\beta$ with respect to any orthonormal basis of $V$.

A {\bf pseudo-metric} $g$ on $M$ is an assignment $p\mapsto g_p\in L^2(T_p M,\mathbb{R})$ 
such that the following conditions are met:
\begin{enumerate}
	\item $g_p(X_p,Y_p)=g_p(Y_p,X_p)$ for all $X_p,Y_p\in T_p M$,
	\item $g_p$ is non-degenerate for all $p\in M$,
	\item if $(U,(x^1,\cdots,x^n))$ is a chart of  $M$, the coefficients $g_{ij}$ of the  local representation
	$$g_p=\sum_{i,j=1}^n g_{ij}(p)\cdot dx^i|_p\otimes dx^j|_p,$$
	are smooth functions,
	\item the index of $g_p$ is the same for all $p\in M$.
\end{enumerate}
In other words, a pseudo-metric is a field of tensors of type  $(0,2)$ that is symmetric, 
non-degenerate  and of constant index. The pair $(M,g)$, where $g$ is a  pseudo-metric on $M$, 
is called a {\bf pseudo-Riemannian manifold}.

The common index $\nu$ of  $g_p$ in a  pseudo-Riemannian manifold $(M,g)$ is the {\bf index} of $M$. 
When $\nu=0$ we say that $(M,g)$ is a {\bf Riemannian manifold}.
In such case $g_p$ determines an inner product over $T_p M$ for all $p\in M$. 
On the other hand, when $\nu=1$ and $n\geq 2$ the pair $(M,g)$ is called a {\bf Lorentzian manifold}. 
In the first case, the signature of $g$ is $(0,n)$ while in the second case $(1,n)$. 
A bilinear form  over a finite dimensional real vector space that satisfies 
the first two conditions of our definition is called a {\bf scalar product}. 
An inner product is a scalar product that is positive definite.

A linear connection $\nabla$ on a pseudo-Riemannian manifold $(M,g)$ 
is said to be {\bf compatible with the  pseudo-metric structure} of $M$ if it satisfies $\nabla g=0$, that is, if 
\begin{equation}\label{metricconnection}
X\cdot g(Y,Z)=g(\nabla_XY,Z)+g(Y,\nabla_XZ),
\end{equation}
for all $X,Y,Z\in\mathfrak{X}(M)$. The following result is usually called the {\bf Fundamental theorem of pseudo-Riemannian Geometry}.
\begin{theorem}[Levi-Civita]\label{Levi-Civitaconnection}
	Given $(M,g)$ a pseudo-Riemannian manifold, 
	there exists a unique linear connection $\nabla$ on $M$ that is compatible with  
	the pseudo-metric structure of  $M$ and has vanishing torsion tensor. 
	Such a linear connection is characterized by the {\bf Koszul formula}
	\begin{eqnarray*}
		2g(\nabla_XY,Z) & =  X\cdot g(Y,Z)+Y\cdot g(Z,X)-Z\cdot g(X,Y)+\\
		& \hskip 10pt  -g(X,[Y,Z])+g(Y,[Z,X])+g(Z,[X,Y]),
	\end{eqnarray*}
	for all $X,Y,Z\in\mathfrak{X}(M)$.
\end{theorem}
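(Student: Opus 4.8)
The plan is to prove uniqueness and existence separately, with the Koszul formula serving as the bridge between them: it is forced on any connection enjoying the two required properties (which yields uniqueness), and conversely it can be used as a definition (which yields existence), because non-degeneracy of $g$ allows one to solve for $\nabla_X Y$ once the right-hand side is known to be $C^\infty(M)$-linear in $Z$.

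For uniqueness, I would assume that some $\nabla$ is simultaneously torsion-free and compatible with $g$. Writing the compatibility identity \eqref{metricconnection} for the three cyclic arrangements of $X,Y,Z$, namely for $X\cdot g(Y,Z)$, $Y\cdot g(Z,X)$ and $Z\cdot g(X,Y)$, I would then form the combination (first) $+$ (second) $-$ (third). Six terms of the form $g(\nabla_\bullet\bullet,\bullet)$ appear; grouping them in pairs and replacing each difference $\nabla_U V-\nabla_V U$ by $[U,V]$ via $T_\nabla=0$ (see \eqref{torsion}), everything collapses to $2g(\nabla_X Y,Z)$ on one side and to exactly the right-hand side of the Koszul formula on the other. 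Since $g_p$ is non-degenerate at every $p$, knowing $g(\nabla_X Y,Z)$ for all $Z$ determines $\nabla_X Y$, so at most one such connection exists.

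For existence I would reverse the argument and use the Koszul formula to define $\nabla$. Fixing $X,Y\in\mathfrak{X}(M)$, let $\omega(Z)$ denote the right-hand side of the Koszul formula, viewed as a map $\mathfrak{X}(M)\to C^\infty(M)$. The crucial step, and the one I expect to be the main obstacle, is to verify that $\omega$ is $C^\infty(M)$-linear in $Z$, that is, $\omega(fZ)=f\,\omega(Z)$ for all $f\in C^\infty(M)$. This is where the signs and the symmetry of $g$ matter: replacing $Z$ by $fZ$ produces extra terms $X(f)g(Y,Z)$ and $Y(f)g(Z,X)$ from the first two metric-derivative terms, together with $-X(f)g(Y,Z)$ from $[fZ,X]$ and $-Y(f)g(X,Z)$ from $[Y,fZ]$; using that $g$ is symmetric, these cancel in pairs, leaving $\omega(fZ)=f\,\omega(Z)$. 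Being $C^\infty(M)$-linear, $\omega$ is a genuine $1$-form, so non-degeneracy of $g$ furnishes a unique vector field, which I name $\nabla_X Y$, satisfying $2g(\nabla_X Y,Z)=\omega(Z)$ for all $Z$.

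It then remains to check that this $\nabla$ is an honest linear connection with the two desired properties. Additivity and $\mathbb{R}$-bilinearity are immediate, while $C^\infty(M)$-linearity in $X$ and the Leibniz rule $\nabla_X(fY)=X(f)Y+f\nabla_X Y$ follow by substituting $fX$ (respectively $fY$) into $\omega$ and tracking the $X(f)$-terms as above, again invoking non-degeneracy. Torsion-freeness follows by computing $2g(\nabla_X Y-\nabla_Y X,Z)$ from the Koszul formula: the symmetric terms cancel and one is left with $2g([X,Y],Z)$, whence $T_\nabla=0$. Finally, compatibility \eqref{metricconnection} follows by adding the Koszul expressions for $2g(\nabla_X Y,Z)$ and $2g(\nabla_X Z,Y)$, where the bracket terms cancel and the derivative terms reassemble into $2X\cdot g(Y,Z)$. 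These last verifications are routine bookkeeping once the tensoriality of $\omega$ has been established, which is precisely why the cancellation computation above is the heart of the argument.
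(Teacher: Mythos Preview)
Your proof is correct and is precisely the standard textbook argument for the Fundamental Theorem of pseudo-Riemannian geometry. The paper, however, does not supply a proof of this theorem at all: it merely states Theorem~\ref{Levi-Civitaconnection} and then proceeds to use it, so there is nothing to compare against. Your derivation of the Koszul formula from compatibility plus torsion-freeness, followed by the tensoriality check in $Z$ and the verification that the resulting $\nabla$ is a torsion-free metric connection, is exactly the argument one finds in standard references (e.g.\ Kobayashi--Nomizu, which the paper cites), and would fill the gap cleanly.
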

The linear connection of  Theorem \ref{Levi-Civitaconnection} is called the {\bf Levi-Civita connection}. 
It is important to observe that the Koszul formula implies that the   
Christoffel symbols associated to the  Levi-Civita connection satisfy the relation
\begin{equation}
\sum_{l=1}^ng_{lk}\Gamma_{ji}^{l}=\dfrac{1}{2}\left(\dfrac{\partial g_{ki}}{\partial x^j}+\dfrac{\partial g_{jk}}{\partial x^i}-\dfrac{\partial g_{ji}}{\partial x^k} \right),
\end{equation}
for all $i,j,k=1,\cdots,n$.
When the curvature tensor of the Levi-Civita connection 
$\nabla$ associated to  a pseudo-Riemannian manifold $(M,g)$ vanishes, 
the pseudo-metric $g$ is called {\bf flat}, and the pair 
$(M,g)$ is a {\bf flat pseudo-Riemannian manifold}. 

The basic model of  flat pseudo-Riemannian manifolds is the space 
$(\mathbb{R}^n_\nu,g_0^\nu,\nabla^0)$ where $\mathbb{R}^n_\nu$ 
equals $\mathbb{R}^n$  with pseudo-metric $g_0^\nu$ of index $\nu$ 
with $0\leq\nu\leq n$, defined by 
$$g_0^\nu=-\sum_{j=1}^\nu dx^j\otimes dx^j+\sum_{j=\nu+1}^n dx^j\otimes dx^j.$$

A simple computation shows that the usual  linear connection  
$\nabla^0$ of $\mathbb{R}^n$ is the  Levi-Civita connection 
associated to $g_0^\nu$. When $\nu=0$, the pseudo-Riemannian manifold 
$\mathbb{R}_\nu^n$ reduces to $\mathbb{R}^n$. 
On the other hand, for $\nu=1$ and $n\geq 2$, the manifold $\mathbb{R}_1^n$ 
is known as  the {\bf $n$-dimensional Minkowski space}. 
The Lorentzian manifold  $(\mathbb{R}_1^4,g_0^1)$ is the basic model for relativistic  space-time.

An {\bf isometry} between two pseudo-Riemannian manifolds 
$(M_1,g_1)$ and $(M_2,g_ 2)$ is a diffeomorphism $f: M_1\to M_2$ satisfying $f^\ast g_2=g_1$, that is,
$$(g_2)_{F(p)}(F_{\ast,p}(X_p),F_{\ast,p}(Y_p))=(g_1)_p(X_p,Y_p),$$
for all $X_p,Y_p\in T_pM_1$ with $p\in M_1$. 

\begin{remark}
	If $(M,g)$ is a  pseudo-Riemannian manifold and $f:M\to M$ is an isometry, 
	the uniqueness of the Levi-Civita connection $\nabla$ associated to  
	$g$ implies that $f$ is an affine transformation of $(M,\nabla)$. 
	More precisely, we have
	\begin{equation}\label{AffineLevi-Civita}
	f_\ast^{-1}\left(\nabla_{f_\ast X}f_\ast Y\right)=\nabla_XY,
	\end{equation}
	for all $X,Y\in\mathfrak{X}(M)$ (see \cite[p.\thinspace161]{KN}).
\end{remark}
If $O(n,\mathbb{R})$ denotes the linear orthogonal group, 
the {\bf group of isometries} of $(\mathbb{R}^n,g_0)$ is the Lie group 
$\text{OAff}(\mathbb{R}^n)=\mathbb{R}^n\rtimes_{Id} O(n,\mathbb{R})$ 
determined by the  semi-direct product of the Abelian  Lie group  
$(\mathbb{R}^n,+)$ and the orthogonal group $O(n,\mathbb{R})$ via the identity representation.  
An important consequence of Theorem   \ref{Characterization1} to the case of Riemannian  manifolds, 
which can be proven in a similar way, is the following result (see for instance \cite{M}).
\begin{proposition}
	A real smooth manifold $M$ of dimension $n$ admits a flat 
	Riemannian metric if and only if there exists an atlas 
	$\lbrace (U_\alpha,\varphi_\alpha) \rbrace_{\alpha\in J}$ of $M$ 
	for which the changes of coordinates are restrictions of the elements of the group of isometries of 
	$(\mathbb{R}^n,g_0)$; that is, for each $\alpha,\beta\in J$ with  
	$U_\alpha\cap U_\beta\neq \emptyset$, there exists 
	$\sigma_{\alpha\beta}\in\text{OAff}(\mathbb{R}^n)$ such that
	$$\left.\varphi_\beta\circ \varphi_\alpha^{-1}\right|_{\varphi_\alpha(U_\alpha\cap U_\beta)}= \left.\sigma_{\alpha\beta}\right|_{\varphi_\alpha(U_\alpha\cap U_\beta)}.$$ \hfill $\square$
\end{proposition}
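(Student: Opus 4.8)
The plan is to mirror the proof of Theorem~\ref{Characterization1}, refining the affine atlas produced there into one whose transition maps are Euclidean isometries. The key extra ingredient is the metric-compatibility condition $\nabla g=0$, which forces the components of $g$ to be \emph{constant} in any chart where the Christoffel symbols of the Levi--Civita connection vanish. Both implications then reduce to the observation that an affine map of $\mathbb{R}^n$ preserving $g_0$ is exactly an element of $\mathrm{OAff}(\mathbb{R}^n)$.

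For the direction from the atlas to the metric, I would start with an atlas $\{(U_\alpha,\varphi_\alpha)\}_{\alpha\in J}$ whose transition maps are restrictions of elements $\sigma_{\alpha\beta}\in\mathrm{OAff}(\mathbb{R}^n)$, and define a metric on $M$ by $g|_{U_\alpha}=\varphi_\alpha^{\ast}g_0$. Since each $\sigma_{\alpha\beta}$ is an isometry of $(\mathbb{R}^n,g_0)$, on $U_\alpha\cap U_\beta$ one has $\varphi_\beta=\sigma_{\alpha\beta}\circ\varphi_\alpha$ and hence $\varphi_\beta^{\ast}g_0=\varphi_\alpha^{\ast}\sigma_{\alpha\beta}^{\ast}g_0=\varphi_\alpha^{\ast}g_0$, so the local pieces agree on overlaps and $g$ is a well-defined Riemannian metric, positive definiteness being preserved under pullback by a diffeomorphism. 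In the chart $\varphi_\alpha$ the metric $g$ has the constant coefficients $g_{ij}=\delta_{ij}$; by the relation between the Christoffel symbols and the metric coefficients given after Theorem~\ref{Levi-Civitaconnection}, constant coefficients make all $\Gamma^k_{ij}$ vanish, so the Levi--Civita connection of $g$ agrees with $\nabla^0$ in each chart and is therefore flat.

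For the converse, suppose $g$ is a flat Riemannian metric with Levi--Civita connection $\nabla$. Being flat and torsion free, $\nabla$ is a flat affine connection, so Theorem~\ref{Characterization1} furnishes normal charts $(U_p,(x^1,\dots,x^n))$, built from $\exp_p$ and a basis of $T_pM$, in which $\Gamma^k_{ij}\equiv 0$. Feeding $\Gamma^k_{ij}=0$ into the Christoffel--metric relation yields $\partial_j g_{ki}+\partial_i g_{jk}-\partial_k g_{ji}=0$ for all $i,j,k$; adding this to the identity obtained by interchanging $j$ and $k$ and using the symmetry $g_{ij}=g_{ji}$ gives $2\,\partial_i g_{jk}=0$, so the $g_{ij}$ are constant on each such chart. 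Choosing the initial basis $\{X_1,\dots,X_n\}$ of $T_pM$ to be $g_p$-orthonormal then forces $g_{ij}\equiv g_p(X_i,X_j)=\delta_{ij}$, that is, $\varphi_\alpha^{\ast}g_0=g$ and each chart is an isometry onto its image in $(\mathbb{R}^n,g_0)$.

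It remains to identify the transition maps. By Theorem~\ref{Characterization1} each $\varphi_\beta\circ\varphi_\alpha^{-1}$ is already the restriction of an affine map $x\mapsto Ax+b$; since both $\varphi_\alpha$ and $\varphi_\beta$ pull $g_0$ back to $g$, this affine map preserves $g_0$, which forces $A\in O(n,\mathbb{R})$ and hence $\varphi_\beta\circ\varphi_\alpha^{-1}\in\mathrm{OAff}(\mathbb{R}^n)$. I expect the main obstacle to be precisely this coupling in the converse: one must combine the flatness of $\nabla$ (to obtain charts with vanishing Christoffel symbols) with $\nabla g=0$ (to make the metric coefficients constant) and an orthonormal choice of frame (to normalize them to $\delta_{ij}$), thereby upgrading the purely affine atlas of Theorem~\ref{Characterization1} to a Euclidean one.
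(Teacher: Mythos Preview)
Your proposal is correct and follows exactly the route the paper indicates: the paper does not spell out a proof of this proposition but explicitly says it ``can be proven in a similar way'' to Theorem~\ref{Characterization1}, which is precisely what you do by pulling back $g_0$ through the charts in one direction and, in the converse, upgrading the affine normal charts of Theorem~\ref{Characterization1} via $\nabla g=0$ and an orthonormal choice of frame to force $g_{ij}\equiv\delta_{ij}$. The only step worth tightening when you write it up is to note that $\partial/\partial x^i\big|_p=X_i$ in the normal coordinates of Theorem~\ref{Characterization1}, so that $g_{ij}(p)=g_p(X_i,X_j)=\delta_{ij}$ is indeed the value the constant functions $g_{ij}$ take.
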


For the next examples, we denote by $G_j$ the discrete subgroup of  $\text{OAff}(\mathbb{R}^2)$ 
which acts freely and properly discontinuously  over $\mathbb{R}^2$, for $j=1,2,3,4$. 
Recall that in such a case the quotient manifold $\mathbb{R}^2/G_j$ 
admits an affine structure whose changes of coordinates are restrictions of elements of  
$G_j$ (see \cite[p.\thinspace 349]{R}). 
There exists four types  of flat complete $2$-dimensional  Riemannian manifolds other than 
$(\mathbb{R}^2,g_0)$ they are given in the following example. 
See for instance \cite[p.\thinspace 209-224]{KN} for further details.

\begin{example} 
	\emph{Ordinary cylinder.}  Let $G_1$ be the set of transformations of  
	$\mathbb{R}^2$ defined by  $$C_n(x,y)=(x+n,y),\quad\text{for all}\quad n\in\mathbb{Z}.$$
	The quotient manifold $\mathbb{R}^2/G_1$ determined by the action of  $G_1$ over 
	$\mathbb{R}^2$ is diffeomorphic to the ordinary cylinder $S^1\times \mathbb{R}$.
\end{example}
\begin{example}
	\emph{Ordinary torus.} Consider the set $G_2$ of transformations of  
	$\mathbb{R}^2$ given by  
	$$T_{n,m}^{a,b}(x,y)=(x+ma+n,y+mb),$$
	for all $n,m\in\mathbb{Z}$ and $a,b\in\mathbb{R}$, $b\neq 0$.
	The quotient manifold $\mathbb{R}^2/G_2$ determined by the action of  $G_2$ over 
	$\mathbb{R}^2$ is diffeomorphic to the ordinary torus.
\end{example}
\begin{example}
	\emph{Infinite M\"obius band.} We denote by $G_3$ the set of transformations of  $\mathbb{R}^2$ defined by
	$$M_n(x,y)=(x+n,(-1)^ny),$$
	for all $n\in\mathbb{Z}$. 
	The quotient manifold $\mathbb{R}^2/G_3$ determined by the action of  $G_3$ over 
	$\mathbb{R}^2$ is diffeomorphic to 
	the infinite M\"obius band.
\end{example}
\begin{example}
	\emph{Klein bottle.} Let $G_4$ be the set of transformations of  $\mathbb{R}^2$ given by
	$$K_{n,m}^b(x,y)=(x+n,(-1)^ny+bm),$$
	for all $n,m\in\mathbb{Z}$ and $b\in\mathbb{R}\backslash\lbrace 0\rbrace$. 
	The quotient manifold $\mathbb{R}^2/G_4$ determined by the action of  $G_4$ over 
	$\mathbb{R}^2$ is diffeomorphic to the Klein bottle.
\end{example}

The existence of partitions of unity for  $M$ helps us guaranty the existence of  
Riemannian metrics on  $M$. Nevertheless, partitions of unity do not allow us to prove 
the existence of pseudo-metrics on $M$ with index at least $1$. 
In fact, there are topological obstructions to the existence of such pseudo-metrics. 
For example, a compact manifold $M$ admits a  Lorentzian
metric if and only if its Euler characteristic $\chi(M)$ is equal to  zero. 
This because in such cases
we can  guaranty the existence  of a nowhere vanishing vector field on 
$M$ (see \cite{Ma} or \cite[p.\thinspace 207]{S}). 
The only compact two dimensional surfaces satisfying this condition are the torus and the Klein bottle.

\section{Flat pseudo-Riemannian Lie groups}
Let $G$ be a  real connected Lie group of dimension $n$ and $\mathfrak{g}$ its  Lie algebra. 
The goal of this section is to discuss the open problem proposed by  J. Milnor in \cite{M} of describing   
left invariant flat affine structures in the case when  $G$ admits left invariant flat pseudo-metrics.

A pseudo-metric $\mu$ on $G$ is called {\bf left invariant} if $L_\sigma^\ast\mu=\mu$ for all $\sigma\in G$. 
In other words, $\mu$ is left invariant if $L_\sigma$ is an isometry of $(G,\mu)$ for all $\sigma\in G$.
The pair $(G,\mu)$, where $\mu$ is a  left invariant pseudo-metric on $G$, is called a {\bf pseudo-Riemannian Lie group}.

There is a faithful correspondence between  left invariant pseudo-metrics  on 
$G$ and scalar products on $\mathfrak{g}$, 
depicted as follows. If $\mu$ is a  left invariant pseudo-metric on $G$, then 
$\mu_\epsilon:\mathfrak{g}\times\mathfrak{g}\to \mathbb{R}$ 
defines a scalar product on $\mathfrak{g}$. 
On the other hand, given a scalar product 
$\mu_0:\mathfrak{g}\times\mathfrak{g}\to \mathbb{R}$ for $\mathfrak{g}$, 
as a consequence of the chain rule, we can define a left invariant pseudo-metric $\mu$ on $G$ 
by the formula
\begin{equation}\label{pseudometricaSG}
\mu_\sigma(X_\sigma,Y_\sigma)=\mu_0((L_{\sigma^{-1}})_{\ast,\sigma}(X_\sigma),(L_{\sigma^{-1}})_{\ast,\sigma}(Y_\sigma)),
\end{equation}
for all $X_\sigma,Y_\sigma\in T_\sigma G$ with $\sigma\in G$.
If $(G,\mu)$ is a pseudo-Riemannian Lie group, identity \eqref{AffineLevi-Civita} 
implies that the Levi-Civita connection  $\nabla$ associated to  
$\mu$ is a left invariant linear connection. 
On the other hand, since $\mu$ is a  left invariant pseudo-metric we get
$$\mu_\sigma(x^+_\sigma,y^+_\sigma)=\mu_\sigma((L_\sigma)_{\ast,\epsilon}(x),(L_\sigma)_{\ast,\epsilon}(y))=\mu_\epsilon(x,y),$$
for all $x,y\in\mathfrak{g}$.
This implies that the map  $\mu(x^+,y^+):G\to\mathbb{R}$ 
defined by $\sigma\mapsto \mu_\sigma(x^+_\sigma,y^+_\sigma)$, 
is constant for all $x^+,y^+\in\mathfrak{X}_{l}(G)$. 
Therefore, putting $x\cdot y=L_x(y)=(\nabla_{x^+}y^+)(\epsilon)$ 
for $x,y\in \mathfrak{g}$, we have
\begin{equation}\label{productLevi-Civita1}
[x,y]=x\cdot y-y\cdot x\quad\text{and}
\end{equation}
\begin{equation}\label{productLevi-Civita2}
\mu_\epsilon(L_x(y),z)+\mu_\epsilon(y,L_x(z))=0,
\end{equation}
for $x,y,z\in\mathfrak{g}$.
The bilinear map $\cdot:\mathfrak{g}\times\mathfrak{g}\to\mathfrak{g}$ 
defined by $x\cdot y=L_x(y)=(\nabla_{x^+}y^+)(\epsilon)$ is called the 
{\bf Levi-Civita product}. 
The Koszul formula implies that the Levi-Civita product is characterized by the expression
\begin{equation}\label{Levi-CivitaProduct}
\mu_\epsilon(L_x(y),z)=\dfrac{1}{2}(\mu_\epsilon([x,y],z)-\mu_\epsilon([y,z],x)+\mu_\epsilon([z,x],y)),
\end{equation}
where $x,y,z\in\mathfrak{g}$.

A pseudo-Riemannian Lie group $(G,\mu)$ is called {\bf flat} 
if the curvature tensor of the  Levi-Civita connection associated to $\mu$ is identically zero. 

Let $(V,\mu_0)$ be a real finite-dimensional vector space with a scalar product  $\mu_0$. 
The group of {\bf orthogonal transformations} of $(V,\mu_0)$, denoted by $O(V,\mu_0)$, 
is defined as the set of 
transformations $T:V\to V$ which satisfy $\mu_0(T(x),T(y))=\mu_0(x,y)$ for all $x,y\in V$. 
It is a Lie group whose Lie algebra is the set $\mathfrak{o}(V,\mu_0)$ of endomorphisms 
$t:V\to V$ verifying the identity $\mu_0(t(x),y)+\mu_0(x,t(y))=0$ for all $x,y\in V$. 
The {\bf group of isometries} of $(V,\mu_0)$, denoted by $\text{OAff}(V)$, 
is defined as the semi-direct product $V\rtimes_{Id} O(V,\mu_0)$ of the Abelian  
Lie group $(V,+)$ and $O(V,\mu_0)$ via the identity representation.
\begin{remark}
	
	If $(G,\mu)$ is a flat pseudo-Riemannian Lie group  and $\nabla$ is the Levi-Civita 
	connection associated to  $\mu$, the map 
	$L\colon\mathfrak{g}\to\mathfrak{o}(\mathfrak{g},\mu_\epsilon)$ defined by 
	$x\mapsto L_x$, where $L_x\colon \mathfrak{g}\to \mathfrak{g}$ is the linear map  
	defined by $L_x(y)=(\nabla_{x^+}y^+)(\epsilon)$ for all $x,y\in\mathfrak{g}$, 
	is a well defined  Lie algebra homomorphism.
\end{remark}
We can now have a first characterization of  flat pseudo-Riemannian Lie groups as given by  
A. Aubert and A. Medina in \cite{AuM}.
\begin{proposition}[Aubert-Medina]\label{ThmAuberMedina1}
	Let $G$ be a  real connected Lie group of dimension  $n$, $\mathfrak{g}$ its Lie algebra, and 
	$\widetilde{G}$ its universal covering Lie group. Then, the following are equivalent.
	\begin{enumerate}
		\item There exists a left invariant flat pseudo-metric on $G$.
		\item There exist a scalar product $\mu_0:\mathfrak{g}\times\mathfrak{g}\to \mathbb{R}$ 
		and a bilinear map $\cdot:\mathfrak{g}\times\mathfrak{g}\to \mathfrak{g}$ over $\mathfrak{g}$ 
		such that \eqref{productLevi-Civita1} and \eqref{productLevi-Civita2} 
		are satisfied together with 
		$L_{[x,y]}=[L_x,L_y]_{\mathfrak{gl}(\mathfrak{g})}$, for all $x,y\in\mathfrak{g}$.
		\item There exist a real $n$-dimensional vector space $(V,\mu_0)$ 
		together with a scalar product $\mu_0$ and a
		Lie group homomorphism $\rho: \widetilde{G}\to \text{OAff}(V,\mu_0)$ 
		such that the left action of  $\widetilde{G}$ over $V$ 
		defined by $\sigma\cdot v=\rho(\sigma)(v)$ for all $(\sigma,v)\in \widetilde{G}\times V$ 
		admits a point with open orbit and discrete isotropy.
	\end{enumerate}
\end{proposition}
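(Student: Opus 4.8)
The plan is to follow closely the scheme of the proof of Theorem~\ref{characterizatioFALIC}, inserting the scalar product at every stage so that the structure group shrinks from $\text{Aff}$ to $\text{OAff}$. The bridge throughout is the faithful correspondence \eqref{pseudometricaSG} between left invariant pseudo-metrics on $G$ and scalar products on $\mathfrak{g}$, together with the observation (already used in the affine case) that flatness of the Levi-Civita connection is equivalent to the Lie algebra homomorphism property $L_{[x,y]}=[L_x,L_y]_{\mathfrak{gl}(\mathfrak{g})}$. The genuinely new ingredient is that the compatibility condition $\nabla\mu=0$ forces each $L_x$ to be skew-symmetric for $\mu_\epsilon$, i.e.\ $L_x\in\mathfrak{o}(\mathfrak{g},\mu_\epsilon)$; this is precisely what promotes the affine data to metric data.

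For $1\Rightarrow 2$ I would start from a left invariant flat pseudo-metric $\mu$ and set $\mu_0=\mu_\epsilon$. By \eqref{AffineLevi-Civita} the associated Levi-Civita connection $\nabla$ is left invariant, so Lemma~\ref{Leftinvariantconnection1} yields the Levi-Civita product $x\cdot y=L_x(y)=(\nabla_{x^+}y^+)(\epsilon)$. Substituting this into the torsion and curvature formulas exactly as in $1\Rightarrow 2$ of Theorem~\ref{characterizatioFALIC} produces \eqref{productLevi-Civita1} and $L_{[x,y]}=[L_x,L_y]_{\mathfrak{gl}(\mathfrak{g})}$, while evaluating the compatibility identity \eqref{metricconnection} on left invariant fields gives \eqref{productLevi-Civita2}. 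For $2\Rightarrow 3$ I would reuse verbatim the construction of $\rho\colon\widetilde{G}\to\text{Aff}(\mathfrak{g})$, $\sigma\mapsto(Q(\sigma),F_\sigma)$, from Theorem~\ref{characterizatioFALIC}; the only addition is the remark that \eqref{productLevi-Civita2} says $L_x\in\mathfrak{o}(\mathfrak{g},\mu_0)$, whence $F_\sigma=\text{Exp}(L_x)\in O(\mathfrak{g},\mu_0)$ and $\rho$ actually lands in $\text{OAff}(\mathfrak{g},\mu_0)$. The open-orbit and discrete-isotropy statements for the point $0\in\mathfrak{g}$ are then inherited unchanged.

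For $3\Rightarrow 1$ I would differentiate $\rho$ at the identity to obtain a Lie algebra homomorphism $\theta\colon\mathfrak{g}\to V\rtimes\mathfrak{o}(V,\mu_0)$, $x\mapsto(q(x),f_x)$, where now $f_x\in\mathfrak{o}(V,\mu_0)$ and $q$ satisfies the cocycle identity \eqref{1cocycle}. As before the map $\psi_v\colon\mathfrak{g}\to V$, $x\mapsto q(x)+f_x(v)$, is a linear isomorphism, and $L_x=\psi_v^{-1}\circ f_x\circ\psi_v$ defines a bilinear product satisfying $L_{[x,y]}=[L_x,L_y]_{\mathfrak{gl}(\mathfrak{g})}$ and $[x,y]=L_x(y)-L_y(x)$. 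The new step is to transport $\mu_0$ to $\mathfrak{g}$ by setting $\mu_\epsilon(x,y)=\mu_0(\psi_v(x),\psi_v(y))$; skew-symmetry of $f_x$ then immediately delivers \eqref{productLevi-Civita2} for this scalar product. Using the correspondence \eqref{pseudometricaSG} I would extend $\mu_\epsilon$ to a left invariant pseudo-metric $\mu$ on $G$, whose Levi-Civita product is $L_x$ and whose curvature vanishes by $L_{[x,y]}=[L_x,L_y]_{\mathfrak{gl}(\mathfrak{g})}$.

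The main obstacle I anticipate lies in this last implication: confirming that the connection built from $L_x$ is genuinely the Levi-Civita connection of $(\mathfrak{g},\mu_\epsilon)$, and not merely some flat metric connection. This is settled by the uniqueness half of Theorem~\ref{Levi-Civitaconnection}: the torsion-free condition \eqref{productLevi-Civita1} and the compatibility condition \eqref{productLevi-Civita2} together force $L_x$ to coincide with the product characterized by the Koszul formula \eqref{Levi-CivitaProduct}, so no independent verification of \eqref{Levi-CivitaProduct} is required once those two identities are in hand. The remaining isomorphism between the homomorphisms $x\mapsto(x,L_x)$ and $x\mapsto(q(x),f_x)$, implemented by $\psi_v$, is formally identical to the closing paragraph of the proof of Theorem~\ref{characterizatioFALIC}.
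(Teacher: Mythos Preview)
Your proposal is correct and follows essentially the same route as the paper: both reduce each implication to the corresponding step of Theorem~\ref{characterizatioFALIC}, adding the single observation that \eqref{productLevi-Civita2} places $L_x$ in $\mathfrak{o}(\mathfrak{g},\mu_0)$, which is exactly what forces $\rho$ to land in $\text{OAff}$ and, conversely, what makes the transported product the Levi-Civita product of the transported scalar product. Your explicit appeal to the uniqueness part of Theorem~\ref{Levi-Civitaconnection} in $3\Rightarrow 1$ is a welcome clarification of what the paper leaves as ``easy to check''.
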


\begin{proof}
	We first prove that $1$ implies $2$. If $(G,\mu)$ is a flat pseudo -
	Riemannian 
	Lie group and 
	$\nabla$ is the Levi-Civita connection associated to  $\mu$, 
	from our previous observations we know that $\mu_\epsilon$ and the Levi-Civita product associated to $\nabla$ 
	satisfy the required identities. 
	
	To see $2$ implies $3$ recall the proof of Theorem \ref{characterizatioFALIC}. 
	By hypothesis, the map $\theta\colon \mathfrak{g}\to \mathfrak{g}\rtimes_{id}\mathfrak{o}(\mathfrak{g},\mu_0)$ 
	defined by $x\mapsto(x,L_x)$ for all $x\in\mathfrak{g}$
	is a well defined Lie algebra homomorphism. Therefore, 
	using the exponential map of $G$, we obtain a homomorphism of  
	Lie groups $\rho\colon \widetilde{G} \to \text{OAff}(\mathfrak{g},\mu_0)$ for which  
	$0\in\mathfrak{g}$ is  a point with open orbit and discrete isotropy  
	for the left action of  $\widetilde{G}$ over $\mathfrak{g}$ defined by 
	$\sigma\cdot x=\rho(\sigma)(x)$ for all $(\sigma,x)\in \widetilde{G}\times \mathfrak{g}$. 
	
	Finally $3$ implies $1$. Let $\rho\colon \widetilde{G}\to \text{OAff}(V,\mu_0)$ 
	be a homomorphism of  Lie groups, defined by $\sigma\mapsto (Q(\sigma),F_\sigma)$ 
	for all $\sigma\in\widetilde{G}$, where $(V,\mu_0)$ is a real vector space of dimension 
	$n$ together with a scalar product $\mu_0$ such that the orbital map  
	$\pi\colon\widetilde{G}\to \text{Orb}(v)$ defined by 
	$\sigma\mapsto Q(\sigma)+F_\sigma(v)$ is a local diffeomorphism for some  
	$v\in V$. Differentiating on the identity of  $\widetilde{G}$, 
	we obtain a Lie algebra homomorphism 
	$\theta\colon \mathfrak{g} \to \mathfrak{g}\rtimes_{id}\mathfrak{o}(V,\mu_0)$ given by  
	$x \mapsto (q(x),f_x)$, where the linear map  $\psi_v\colon \mathfrak{g} \to V$ defined by 
	$x \mapsto q(x)+f_x(v)$ is an isomorphism. 
	Now, define on $\mathfrak{g}$ the scalar product 
	$\widetilde{\mu}_0$ and the bilinear map $\cdot$ respectively by
	$$\widetilde{\mu_0}(x,y)=\mu_0(\psi_v(x),\psi_v(y))$$
	and
	$$L_x=\psi_v^{-1}\circ f_x\circ \psi_v,\quad y\mapsto x\cdot y=L_x(y),$$
	for all $x,y\in\mathfrak{g}$.
	If $\mu$ denotes the left invariant pseudo-metric on $G$ induced by  
	$\widetilde{\mu_0}$ through Formula \eqref{pseudometricaSG}, 
	it is easy to check that  $\cdot$ is the  Levi-Civita product associated 
	to the  Levi-Civita connection determined by  $\mu$, 
	given that we have $f_x\in\mathfrak{o}(V,\mu_0)$ for all $x\in\mathfrak{g}$. 
	Therefore, as in the proof of Theorem \ref{characterizatioFALIC} 
	we conclude that $\mu$ is a left invariant flat  pseudo-metric on $G$.
\end{proof}

The following result allows us to determine  when the  
Levi-Civita connection associated to  a left invariant flat pseudo-metric is geodesically complete. 
Recall that a linear connection $\nabla$ over a smooth manifold  
$M$ is {\bf geodesically complete} if for  any initial condition $(p,X_p)$ $\in TM$ 
its geodesics are defined for all $t\in\mathbb{R}$. 
If $(G,\nabla)$ is a flat  affine Lie group, J. Helmstetter showed in  \cite{H} that 
$\nabla$ is geodesically complete if and only if  
$\text{tr}(R_x)=0$ for all $x\in\mathfrak{g}$, here $R_x\colon \mathfrak{g}\to\mathfrak{g}$ 
is the linear map  defined by $R_x(y)=(\nabla_{y^+}x^+)(\epsilon)$ for all $x,y\in\mathfrak{g}$. 
On the other hand, a Lie group $G$ is called {\bf unimodular} 
if its left invariant  Haar measure is also  right invariant. J. Milnor showed in  
\cite{M1} that a Lie group $G$ is unimodular if and only if 
$\text{det}(\text{Ad}_\sigma)=\pm 1$ for all $\sigma\in G$. 
If $G$ is connected, this is equivalent to requiring $\text{tr}(\text{ad}_x)=0$ 
for all $x\in\mathfrak{g}$ (compare \cite{M1}). For the next result see \cite{AuM}.
\begin{theorem}[Aubert-Medina]\label{unimodularcompletes}
	Let $(G,\mu)$ be a connected flat pseudo-Riemannian Lie group. 
	Then the Levi-Civita connection associated to  $\mu$ is geodesically complete if and only if  $G$ is  unimodular.
\end{theorem}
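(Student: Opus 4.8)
The plan is to reduce the statement to the comparison of two trace conditions, one governing geodesic completeness and the other unimodularity, and then to show that these conditions coincide by exploiting the skew-symmetry of the left-multiplication operators of the Levi-Civita product. First I would record the two criteria already available in the text. Since $(G,\mu)$ is flat, its Levi-Civita connection $\nabla$ is a left invariant, torsion-free, flat linear connection, so $(G,\nabla)$ is a flat affine Lie group and Helmstetter's theorem applies: $\nabla$ is geodesically complete if and only if $\text{tr}(R_x)=0$ for all $x\in\mathfrak{g}$, where $R_x(y)=(\nabla_{y^+}x^+)(\epsilon)=y\cdot x$. On the other side, Milnor's criterion for the connected group $G$ states that $G$ is unimodular if and only if $\text{tr}(\text{ad}_x)=0$ for all $x\in\mathfrak{g}$.

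Next I would relate these two traces through the Levi-Civita product. Identity \eqref{productLevi-Civita1} reads $[x,y]=x\cdot y-y\cdot x=L_x(y)-R_x(y)$, so at the level of endomorphisms of $\mathfrak{g}$ one has $\text{ad}_x=L_x-R_x$, and taking traces gives
\begin{equation*}
\text{tr}(\text{ad}_x)=\text{tr}(L_x)-\text{tr}(R_x),\qquad x\in\mathfrak{g}.
\end{equation*}

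The crux of the argument is that $\text{tr}(L_x)=0$ for every $x$. This follows from identity \eqref{productLevi-Civita2}, which states precisely that each $L_x$ belongs to $\mathfrak{o}(\mathfrak{g},\mu_\epsilon)$, that is, $L_x$ is skew-symmetric with respect to the scalar product $\mu_\epsilon$. A skew-symmetric endomorphism relative to a non-degenerate symmetric form has vanishing trace: writing $A$ for the Gram matrix of $\mu_\epsilon$ and $T$ for the matrix of $L_x$, the relation $T^{\top}A+AT=0$ yields $A^{-1}T^{\top}A=-T$, so $T$ is conjugate to $-T^{\top}$ and hence $\text{tr}(T)=\text{tr}(-T^{\top})=-\text{tr}(T)=0$. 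I expect this to be the only genuinely substantive step; everything else is bookkeeping.

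Finally I would assemble the equivalence. Because $\text{tr}(L_x)=0$ for all $x$, the displayed identity collapses to $\text{tr}(\text{ad}_x)=-\text{tr}(R_x)$, so $\text{tr}(R_x)=0$ for all $x$ if and only if $\text{tr}(\text{ad}_x)=0$ for all $x$. Chaining this with the two criteria above yields that $\nabla$ is geodesically complete $\iff$ $\text{tr}(R_x)=0$ for all $x$ $\iff$ $\text{tr}(\text{ad}_x)=0$ for all $x$ $\iff$ $G$ is unimodular, which is the assertion. The main obstacle is recognizing that the orthogonality condition \eqref{productLevi-Civita2} on the Levi-Civita product is exactly what forces the left multiplications to be traceless, thereby bridging Helmstetter's completeness condition and Milnor's unimodularity condition.
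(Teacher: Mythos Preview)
Your proposal is correct and follows essentially the same route as the paper: both arguments show $\text{tr}(L_x)=0$ from the skew-symmetry $L_x\in\mathfrak{o}(\mathfrak{g},\mu_\epsilon)$, use $\text{ad}_x=L_x-R_x$ to obtain $\text{tr}(\text{ad}_x)=-\text{tr}(R_x)$, and then invoke Helmstetter's and Milnor's criteria. The only cosmetic difference is that the paper phrases the trace computation via the musical isomorphism $\varphi\colon\mathfrak{g}\to\mathfrak{g}^\ast$ and the transpose ${}^tL_x$, whereas you do the equivalent computation with the Gram matrix.
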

\begin{proof}
	Let $\nabla$ be the Levi-Civita connection associated to the left 
	invariant flat pseudo-metric $\mu$. We denote by 
	$L_x(y)=x\cdot y=(\nabla_{x^+}y^+)(\epsilon)$ the Levi-Civita product on 
	$\mathfrak{g}$ associated to  $\nabla$. If $\mu_\epsilon$ is  the scalar 
	product on $\mathfrak{g}$ induced by $\mu$, we have
	$$\mu_\epsilon(L_x(y),z)+\mu_\epsilon(y,L_x(z))=0,$$
	for all $x,y,z\in\mathfrak{g}$. This implies that $L_x$ is antisymmetric with  
	respect to  $\mu_\epsilon$. Therefore, if $L_x^\ast\colon\mathfrak{g}\to \mathfrak{g}$ 
	denotes the adjoint operator of  $L_x$ with  respect to  
	$\mu_\epsilon$, we have $L_x^\ast=-L_x$ for all $x\in \mathfrak{g}$. 
	On the other hand, if  $\mathfrak{g}^\ast$ denotes the dual space associated to  
	$\mathfrak{g}$ and  $^tL_x\colon\mathfrak{g}^\ast\to \mathfrak{g}^\ast$ 
	is the transposed of the linear map $L_x$, the linear isomorphism  
	$\varphi\colon\mathfrak{g}\to\mathfrak{g}^\ast$ defined by 
	$\varphi(x)=\mu_\epsilon(x,\cdot)$ where $\varphi(x)(y)=\mu_\epsilon(x,y)$ for all $x,y\in\mathfrak{g}$, 
	fits into the  following commutative diagram for all $x\in \mathfrak{g}$
	$$\xymatrix{
		\mathfrak{g} \ar[d]_{\varphi}\ar[r]^{L_x^\ast} & \mathfrak{g} \ar[d]^{\varphi}\\
		\mathfrak{g}^\ast \ar[r]_{^tL_x} & \mathfrak{g}^\ast; 
	}$$
	thus, we have $L_x^\ast=\varphi^{-1} \circ ^tL_x\circ \varphi$ and so
	$$-\text{tr}(L_x)=\text{tr}(L_x^\ast)=\text{tr}(\varphi^{-1} \circ ^tL_x\circ \varphi)=\text{tr}(^tL_x)=\text{tr}(L_x),$$
	for all $x\in\mathfrak{g}$.
	This implies $\text{tr}(L_x)=0$ for $x\in \mathfrak{g}$. 
	
	Now suppose that $\nabla$ is geodesically complete. 
	Since this is a left invariant flat affine connection, we have $\text{tr}(R_x)=0$, 
	where $R_x\colon\mathfrak{g}\to\mathfrak{g}$ is the linear map  
	defined by $R_x(y)=y\cdot x$ for all $x,y\in \mathfrak{g}$. 
	On the other hand, identity \eqref{productLevi-Civita1} 
	implies $\text{ad}_x=L_x-R_x$, and therefore we get
	$$\text{tr}(\text{ad}_x)=\text{tr}(L_x-R_x)=\text{tr}(L_x)-\text{tr}(R_x)=0,$$
	for all $x\in\mathfrak{g}$, which shows that  $G$ is a unimodular  Lie group.
	
	Reciprocally, if $G$ is a unimodular  Lie group, we have 
	$\text{tr}(\text{ad}_x)=0$ for all $x\in \mathfrak{g}$. 
	As we have $\text{ad}_x=L_x-R_x$ and $\text{tr}(L_x)=0$, 
	we obtain $\text{tr}(R_x)=0$ for all $x\in \mathfrak{g}$. 
	From the fact that $\nabla$ is a left invariant flat affine connection and $\text{tr}(R_x)=0$, 
	we get that it is geodesically complete.
\end{proof}
\begin{example} The group of affine transformations of the line $\text{Aff}(\mathbb{R})$ 
	has a natural left invariant flat Lorentzian metric given by 
	$\mu=\dfrac{1}{x^2}(dx\otimes dy+dy\otimes dx)$. 
	The  Levi-Civita connection associated to  $\mu$ is determined by the rules
	$$\nabla_{e_1^+}e_1^+=- e_1^+, \quad\nabla_{e_1^+}e_2^+=e_2^+,\quad\nabla_{e_2^+}e_1^+=\nabla_{e_2^+}e_2^+=0.$$
	Since $\text{Aff}(\mathbb{R})$  is not unimodular, we have that $\nabla$ 
	is not geodesically complete. The natural left invariant 
	Riemannian metric on $\text{Aff}(\mathbb{R})$ given by 
	$\overline{\mu}=\dfrac{1}{x^2}(dx\otimes dx+dy\otimes dy)$ is also not flat. 
	The Levi-Civita connection associated to $\overline{\mu}$ is determined by 
	$$\overline{\nabla}_{e_1^+}e_1^+=\overline{\nabla}_{e_1^+}e_2^+=0,\quad\overline{\nabla}_{e_2^+}e_1^+=-e_2^+,\quad\overline{\nabla}_{e_2^+}e_2^+=e_1^+.$$
	It is easy to verify that the curvature tensor of $\overline{\nabla}$ is not identically zero. 
	As a consequence of Theorem \ref{MilnorflatTheorem} (of next section) 
	it is possible to show that there does not exist left invariant flat Riemannian metrics on $\text{Aff}(\mathbb{R})$.
\end{example}
\begin{example}
	Over the  Heisenberg group $H_3$, we can  define a left invariant flat Lorentzian metric by $$\mu=dx\otimes dz+dy\otimes dy+dz\otimes dx-x(dx\otimes dy+dy\otimes dx).$$
	The  Levi-Civita connection associated to  $\mu$ is determined by  
	$$\nabla_{e_1^+}=\nabla_{e_3^+}=0, \quad\nabla_{e_2^+}e_1^+=-e_3,\quad\nabla_{e_2^+}e_2^+=e_1^+,\quad\nabla_{e_2^+}e_3^+=0.$$
	Since $H_3$ is unimodular, we have that $\nabla$ is geodesically complete. 
	If $H_{2n+1}$ denotes the Heisenberg group of dimension $2n+1$ for $n\in\mathbb{N}$, then $H_{2n+1}$ 
	is a flat pseudo-Riemannian Lie group if and only if $n=1$ (see \cite{AuM}).
\end{example}

\section{Classical cotangent pseudo-Riemannian Lie group}

A simple construction that allows us to obtain flat pseudo-Riemannian 
Lie groups starting out with connected flat affine Lie  groups  is the following 
(see \cite{AuM}). 

Let $(G,\nabla)$ be a connected affine flat
Lie group of dimension $n$, $\mathfrak{g}$ its Lie algebra, and 
$\widetilde{G}$ its universal covering Lie group. 
Since $\nabla$ is a left invariant flat affine connection,
the map $L\colon \mathfrak{g}\to\mathfrak{gl}(\mathfrak{g})$ defined by 
$x\mapsto L_x$, where $L_x(y)=x\cdot y=(\nabla_{x^+}y^+)(\epsilon)$ for all 
$x,y\in\mathfrak{g}$, is a Lie algebra homomorphism. 
The dual representation associated to   $L$ is the Lie algebra homomorphism 
$L^\ast\colon \mathfrak{g}\to \mathfrak{gl}(\mathfrak{g}^*)$, defined by 
$x\mapsto L^*_x=-^tL_x$, where $L_x^\ast(\alpha)=-\alpha\circ L_x$ 
for all $\alpha\in\mathfrak{g}^\ast$. Using the exponential map of  $G$, 
we obtain a  Lie group homomorphism $\Phi\colon \widetilde{G} \to \text{GL}(\mathfrak{g}^*)$ 
via $\displaystyle \exp_G(x) \mapsto\Phi(\exp_G(x))=\sum_{k=0}^{\infty}\dfrac{1}{k!}(L_x^\ast)^k$, 
namely
$$\Phi_{\ast,\widetilde{\epsilon}}(x)=\left.\dfrac{d}{dt} \right\vert_{t=0}(\Phi(\exp(tx)))=L^*_x,$$
for all $x\in\text{Lie}(\widetilde{G})=\mathfrak{g}$.
Therefore, the product manifold  $T^*\widetilde{G}=\widetilde{G}\times \mathfrak{g}^*$ 
is endowed with the structure of a Lie group given by the semidirect product of  
$\widetilde{G}$ with the Abelian  Lie group  $(\mathfrak{g}^\ast,+)$ through $\Phi$; 
more precisely we have 
$$(\sigma,\alpha)\cdot(\tau,\beta)=(\sigma\tau,\Phi(\sigma)(\beta)+\alpha),$$
for all $\sigma,\tau\in G$ and $\alpha,\beta\in\mathfrak{g}^\ast$. The Lie group $T^*\widetilde{G}=\widetilde{G}\ltimes_\Phi\mathfrak{g}^*$ is 
called the {\bf classical pseudo-Riemannian cotangent Lie group} associated to the flat affine connected Lie group  $(G,\nabla)$.\\

Here the term `classical' stands in contrast to the more general 
construction of twisted cotangent Lie groups as used by A. Aubert and A. Medina (see \cite{AuM}). 
The Lie group $T^*\widetilde{G}$ is then characterized by the following result.

\begin{proposition}[Aubert-Medina]\label{AuMcotangent}
	The Lie algebra of $T^*\widetilde{G}=\widetilde{G}\ltimes_\Phi\mathfrak{g}^*$ 
	is the product vector space  $\mathfrak{g}\ltimes_L\mathfrak{g}^\ast$ with  Lie bracket
	\begin{equation}\label{Hess1}
	[(x,\alpha),(y,\beta)]=([x,y],L^*_x(\beta)-L^*_y(\alpha)),
	\end{equation}
	for all $x,y\in\mathfrak{g}$ and $\alpha,\beta\in\mathfrak{g}^\ast$.
	Moreover, 
	\begin{equation}\label{Hess2}
	\widetilde{\omega}((x,\alpha),(y,\alpha))=\alpha(y)+\beta(x),
	\end{equation}
	for all $x,y\in\mathfrak{g}$ and $\alpha,\beta\in\mathfrak{g}^\ast$, is a scalar product over 
	$\mathfrak{g}\ltimes_L\mathfrak{g}^\ast$ with signature $(n,n)$ 
	which, by formula \eqref{pseudometricaSG}, defines a left invariant flat pseudo-metric   
	$T^*\widetilde{G}$ whose Levi-Civita connection is determined by 
	$$\nabla_{(x,\alpha)^+}(y,\alpha)^+=(x\cdot y,L^\ast_x(\beta))^+,$$
	for all $x,y\in\mathfrak{g}$ and $\alpha,\beta\in\mathfrak{g}^\ast$.
\end{proposition}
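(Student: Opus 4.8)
The plan is to prove the three assertions in turn, the first two by direct computation and the third by appealing to the Aubert--Medina characterization (Proposition \ref{ThmAuberMedina1}). First I would identify the Lie bracket of $T^*\widetilde{G}=\widetilde{G}\ltimes_\Phi\mathfrak{g}^*$. Since $(\mathfrak{g}^*,+)$ is abelian and $\widetilde{G}$ acts through $\Phi$ with derived action $\Phi_{\ast,\widetilde{\epsilon}}=L^\ast$, the standard bracket of a semidirect product $\mathfrak{g}\ltimes_{L^\ast}\mathfrak{g}^\ast$ gives exactly
$$[(x,\alpha),(y,\beta)]=([x,y],L^*_x(\beta)-L^*_y(\alpha)),$$
which is \eqref{Hess1}. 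Concretely this is obtained by differentiating the product $(\sigma,\alpha)\cdot(\tau,\beta)=(\sigma\tau,\Phi(\sigma)(\beta)+\alpha)$ twice at the identity.

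Next I would check that $\widetilde{\omega}$, read on general arguments as $\widetilde{\omega}((x,\alpha),(y,\beta))=\alpha(y)+\beta(x)$, is a scalar product of signature $(n,n)$. Symmetry is immediate. For non-degeneracy, if $(x,\alpha)$ pairs to zero against every $(y,\beta)$, then setting $\beta=0$ forces $\alpha=0$ and setting $y=0$ forces $x=0$. For the signature, fix a basis $\{e_i\}$ of $\mathfrak{g}$ with dual basis $\{e^i\}$ of $\mathfrak{g}^\ast$; in the basis $\{e_i\}\cup\{e^j\}$ the Gram matrix of $\widetilde{\omega}$ is the hyperbolic block matrix $\left(\begin{smallmatrix}0 & I_n\\ I_n & 0\end{smallmatrix}\right)$. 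Equivalently, the vectors $(e_i,e^i)$ and $(e_i,-e^i)$ are mutually orthogonal and satisfy $\widetilde{\omega}((e_i,\pm e^i),(e_i,\pm e^i))=\pm 2$, exhibiting $n$ positive and $n$ negative directions, so the index is $n$.

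For the third assertion I would show that the bilinear product $(x,\alpha)\star(y,\beta)=(x\cdot y,L^*_x(\beta))$ is the Levi-Civita product of $\widetilde{\omega}$, so that Proposition \ref{ThmAuberMedina1}(2) applies. Writing $X=(x,\alpha)$, $Y=(y,\beta)$, $Z=(z,\gamma)$ and letting $\mathcal{L}_X$ denote left $\star$-multiplication, condition \eqref{productLevi-Civita1} is immediate, since $X\star Y-Y\star X=([x,y],L^*_x\beta-L^*_y\alpha)$ is the bracket \eqref{Hess1}. The homomorphism condition $\mathcal{L}_{[X,Y]}=[\mathcal{L}_X,\mathcal{L}_Y]$ follows formally: because $L$ is a Lie algebra homomorphism (equation \eqref{leftsymmetricproduct}) and $L^\ast$ is its dual representation, hence also a homomorphism, the two components of $[\mathcal{L}_X,\mathcal{L}_Y]$ collapse to $L_{[x,y]}$ and $L^*_{[x,y]}$, matching $\mathcal{L}_{[X,Y]}$.

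The main obstacle is the metric-compatibility identity \eqref{productLevi-Civita2}, and here the structure of the dual representation is essential. Using $L^*_x=-{}^{t}L_x$, i.e.\ $(L^*_x\beta)(z)=-\beta(x\cdot z)$, one computes
$$\widetilde{\omega}(X\star Y,Z)=-\beta(x\cdot z)+\gamma(x\cdot y),\qquad \widetilde{\omega}(Y,X\star Z)=\beta(x\cdot z)-\gamma(x\cdot y),$$
so that the sum vanishes; the cancellation of the cross terms hinges precisely on the minus sign in $L^\ast=-{}^{t}L$, which is why $L^\ast$ (and not $L$) must act on the $\mathfrak{g}^\ast$-factor. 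Having verified \eqref{productLevi-Civita1} and \eqref{productLevi-Civita2}, the uniqueness in the fundamental theorem (Theorem \ref{Levi-Civitaconnection}) identifies $\star$ with the Levi-Civita product; together with the homomorphism condition, Proposition \ref{ThmAuberMedina1} then guarantees that the left invariant pseudo-metric induced by $\widetilde{\omega}$ through \eqref{pseudometricaSG} is flat, and its Levi-Civita connection is $\nabla_{(x,\alpha)^+}(y,\beta)^+=((x,\alpha)\star(y,\beta))^+=(x\cdot y,L^*_x(\beta))^+$, as claimed.
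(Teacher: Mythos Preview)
Your proof is correct and follows exactly the strategy the paper uses. The paper's own argument is a two-line sketch: the bracket formula \eqref{Hess1} is ``simple to check'' from the semidirect product structure, and the identification of the Levi-Civita connection (hence the flatness) is declared to be ``an immediate consequence of Proposition \ref{ThmAuberMedina1}'' together with the uniqueness of the Levi-Civita connection. You have simply filled in the verifications that the paper leaves implicit --- the signature computation for $\widetilde{\omega}$ and the explicit check that the product $(x,\alpha)\star(y,\beta)=(x\cdot y,L^\ast_x(\beta))$ satisfies \eqref{productLevi-Civita1}, \eqref{productLevi-Civita2}, and the homomorphism condition $\mathcal{L}_{[X,Y]}=[\mathcal{L}_X,\mathcal{L}_Y]$ --- which is precisely what Proposition \ref{ThmAuberMedina1}(2) requires.
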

\begin{proof}
	As the Lie group structure of $T^*\widetilde{G}$ is given by a semidirect product, it is simple to check that its Lie algebra is the vector space $\mathfrak{g}\ltimes_L\mathfrak{g}^\ast$ with Lie bracket given by \eqref{Hess1}. On the other hand, as the Levi-Civita connection is unique, the proof of the last statement is an immediate consequence of Proposition \ref{ThmAuberMedina1}.
\end{proof}
A more general construction appears in the study of the twisted pseudo-Riemannian cotangent Lie group of a connected flat affine Lie group (see \cite[Proposition 2.1]{AuM}).
\begin{remark}
	If $G$ is simply connected flat affine Lie group, then $T^*\widetilde{G}$ is a trivial vector bundle 
	isomorphic to the cotangent bundle $T^\ast G$ of $G$. It is well known that there is a natural way to 
	associate a structure of Lie group to $T^\ast G$ given that it is isomorphic 
	to the trivial bundle $G\times \mathfrak{g}^\ast$ through the vector bundle isomorphism
	\begin{align*}
	&& T^\ast G &\to G\times \mathfrak{g}^\ast\\
	&&   (\sigma,\alpha_\sigma) &\longmapsto (\sigma, \alpha_\sigma\circ (L_\sigma)_{\ast,\epsilon}).
	\end{align*}
	If $\text{Ad}^\ast\colon  G\to \text{GL}(\mathfrak{g}^\ast)$ 
	denotes the co-adjoint representation of $G$, then the product manifold 
	$G\times \mathfrak{g}^\ast$ has a  Lie group structure given by the semi-direct product of  
	$G$ with  the Abelian  Lie group  $\mathfrak{g}^\ast$ through  $\text{Ad}^\ast$; consequently we have
	\begin{equation}\label{Liecotangent0}
	(\sigma,\alpha)\cdot (\tau,\beta)=(\sigma\tau,\text{Ad}^\ast_\sigma(\beta)+\alpha),
	\end{equation}
	for all $\sigma,\tau\in G$ and $\alpha,\beta\in \mathfrak{g}^\ast$.
	Therefore, $T^\ast G$ has the structure of a  Lie group induced by 
	\eqref{Liecotangent0}. If $\text{ad}^\ast\colon  \mathfrak{g}\to \mathfrak{gl}(\mathfrak{g}^\ast)$ 
	denotes the co-adjoint representation of $\mathfrak{g}$, 
	the Lie algebra of $T^\ast G$ is the product  vector space 
	$\mathfrak{g}\ltimes_{ad^\ast} \mathfrak{g}^\ast$ with   Lie bracket
	\begin{equation}\label{Liealgebracotangent0}
	[(x,\alpha),(y,\beta)]=([x,y]_\mathfrak{g},\text{ad}_x^\ast(\beta)-\text{ad}_y^\ast(\alpha)),
	\end{equation}
	for all $x,y\in\mathfrak{g}$ and $\alpha,\beta\in\mathfrak{g}^\ast$.
	As $G$ is a simply connected Lie group, then it is elementary to verify that 
	$T^\ast \widetilde{G}$ is locally isomorphic to the cotangent bundle $T^\ast G$ as 
	Lie groups if the maps 
	$L^\ast\colon  \mathfrak{g}\to \mathfrak{gl}(\mathfrak{g}^\ast)$ and  
	$\text{ad}^\ast\colon  \mathfrak{g}\to \mathfrak{gl}(\mathfrak{g}^\ast)$ are isomorphic representations, 
	that is, there exists a linear isomorphism 
	$\psi:\mathfrak{g}^\ast\to \mathfrak{g}^\ast$ such that $\text{ad}_x^\ast\circ\psi=\psi\circ L_x^\ast$ 
	for all $x\in \mathfrak{g}$. In this case, the linear map $l:\mathfrak{g}\ltimes_L\mathfrak{g}^\ast\to\mathfrak{g}\ltimes_{\text{ad}^\ast}\mathfrak{g}^\ast$ 
	defined by $(x,\alpha)\mapsto (x,\psi(\alpha))$ is a Lie algebra isomorphism.
\end{remark}

\section{Orthogonal Lie groups}
In this section we will study some elementary properties of those 
Lie groups which have bi-invariant  pseudo-metrics. 
These will be called  {\bf orthogonal Lie groups} (see the definition a few lines down). 
To study the main characteristic of orthogonal Lie groups we introduce the notion of  
{\bf orthogonal Lie algebra} which we will 
be used in the method of {\bf double orthogonal extension}  
described by  A. Medina and Ph. Revoy (see \cite{MR}).

We describe how to construct the oscillator    
Lie algebra of the oscillator Lie group which appears in various branches of 
Physics and Mathematical Physics and give rise to particular solutions of the Einstein-Yang-Mills 
equations (see \cite{L}). Finally, we will provide another characterization of  flat   
Riemannian Lie groups due to  J. Milnor (compare \cite{M1}).

For each $\sigma\in G$, we denote by $R_\sigma\colon G\to G$ the 
right multiplications by  $\sigma$ in $G$, 
which is defined by $R_\sigma(\tau)=\tau\sigma$ for all $\tau\in G$. A pseudo-metric $\mu$ on $G$ is {\bf right invariant} if $R_\sigma^\ast\mu=\mu$ for all $\sigma\in G$. In other words, $\mu$ is right invariant if $R_\sigma$ is an isometry of $(G,\mu)$ for all $\sigma\in G$.
A pseudo-metric $\mu$ on $G$ is called {\bf bi-invariant} if it is left invariant and right invariant. The pair $(G,\mu)$, where $\mu$ is a  bi-invariant pseudo-metric over $G$, is called an {\bf orthogonal Lie group}.

If $\mu$ is a  left invariant pseudo-metric on $G$, it is easy to show that $\mu$  
is right invariant if and only if
\begin{equation}\label{orthogonal1}
\mu_\epsilon(\text{Ad}_\sigma(x),\text{Ad}_\sigma(y))=\mu_\epsilon(x,y)
\end{equation}
holds for all $\sigma\in G$ and $x,y\in\mathfrak{g}$.
This implies in the context that $\mu$ is right invariant if and only if the adjoint representation of  
$\mathfrak{g}$ is antisymmetric with  respect to
$\mu_\epsilon$. The latter implies $\text{ad}_x\in\mathfrak{o}(\mathfrak{g},\mu_\epsilon)$ 
for all $x\in\mathfrak{g}$, namely
\begin{equation}\label{orthogonal2}
\mu_\epsilon([x,y],z)+\mu_\epsilon(y,[x,z])=0,
\end{equation}
for all $x,y,z\in\mathfrak{g}$.

A scalar product over $\mathfrak{g}$ which satisfies identity  
\eqref{orthogonal2} is called an {\bf invariant scalar product}. 
A pair  $(\mathfrak{g},\mu_0)$, where $\mathfrak{g}$ 
is a finite dimensional real Lie algebra and   $\mu_0$ is an invariant scalar product over 
$\mathfrak{g}$ is named an {\bf orthogonal Lie algebra}.

If $\mu_0$ is an invariant scalar product over $\mathfrak{g}$, 
the left invariant pseudo-metric defined by the formula 
\eqref{pseudometricaSG}  is also  right invariant. On the other hand, if $(G,\mu)$ is an orthogonal Lie group, 
the  Koszul formula reduced at the  identity \eqref{Levi-CivitaProduct} 
and expression  \eqref{orthogonal2} imply that the  Levi-Civita 
connection $\nabla$ associated to  $\mu$ is determined by
\begin{equation}\label{orthogonal3}
\nabla_{x^+}y^+=\dfrac{1}{2}[x,y]^+,
\end{equation}
for $x,y\in\mathfrak{g}$.
Moreover, as a consequence of the  Jacobi identity in  
$\mathfrak{g}$, it follows that the curvature tensor of  $\nabla$ 
is given by the expression 
\begin{equation}\label{orthogonal4}
R_\nabla(x^+,y^+)z^+=-\dfrac{1}{4}[[x,y],z]^+,
\end{equation}
with $x,y,z\in\mathfrak{g}$.
\begin{remark}
	A left invariant linear connection on $G$ is called a {\bf Cartan 0-connection} 
	if for all $x\in\mathfrak{g}$, the 1-parameter subgroups of  
	$G$ and the geodesic curves of  $\nabla$ determined by the initial condition  
	$(\epsilon,x)\in G\times\mathfrak{g}$ coincide. 
	It is easy to see that every Cartan  0-connection  is geodesically complete. 
	Moreover, there exists a unique 
	Cartan 0-connection on $G$ with vanishing  torsion, 
	as it is  completely determined by Equation \eqref{orthogonal3} (see \cite[p.\thinspace 72]{P}).
\end{remark}
As an immediate consequence of Identity \eqref{orthogonal4} we have the following result (see for instance \cite{AuM}).
\begin{proposition}[Aubert-Medina]
	Let $(G,\mu)$ be an orthogonal Lie group. 
	The bi-invariant  pseudo-metric $\mu$ is flat if and only if $G$ is a  2-nilpotent
	Lie group.
	\hfill$\square$
\end{proposition}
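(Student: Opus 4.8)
The plan is to read everything off directly from the curvature identity \eqref{orthogonal4}, which already encodes the entire content of the statement. Recall first the algebraic meaning of the hypothesis: a connected Lie group $G$ is $2$-nilpotent (nilpotent of step at most $2$) precisely when its Lie algebra $\mathfrak{g}$ satisfies $[[\mathfrak{g},\mathfrak{g}],\mathfrak{g}]=0$, equivalently when the derived ideal $[\mathfrak{g},\mathfrak{g}]$ is central in $\mathfrak{g}$. Since $G$ is connected, I would freely use the standard correspondence that $G$ is $2$-nilpotent if and only if $\mathfrak{g}$ is, so that the whole problem reduces to comparing flatness of $\mu$ with the identity $[[\mathfrak{g},\mathfrak{g}],\mathfrak{g}]=0$.

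For the forward direction, suppose $\mu$ is flat, i.e. $R_\nabla=0$. Evaluating \eqref{orthogonal4} on arbitrary $x,y,z\in\mathfrak{g}$ gives $-\tfrac14[[x,y],z]^+=R_\nabla(x^+,y^+)z^+=0$, and because the assignment $w\mapsto w^+$ is a linear isomorphism of $\mathfrak{g}$ onto $\mathfrak{X}_l(G)$ (in particular injective), this forces $[[x,y],z]=0$ for all $x,y,z$, that is $[[\mathfrak{g},\mathfrak{g}],\mathfrak{g}]=0$. Hence $\mathfrak{g}$, and therefore $G$, is $2$-nilpotent. Conversely, assume $G$ is $2$-nilpotent, so $[[x,y],z]=0$ for all $x,y,z\in\mathfrak{g}$; then \eqref{orthogonal4} immediately yields $R_\nabla(x^+,y^+)z^+=0$ on every triple of left-invariant vector fields.

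The one step that deserves a word of care is upgrading this last vanishing from a left-invariant frame to all of $\mathfrak{X}(G)$. Here I would invoke that $R_\nabla$ is a tensor, i.e. it is $C^\infty(G)$-linear in each of its three arguments, together with the fact recalled in the proof of Lemma \ref{Leftinvariantconnection1} that the left-invariant vector fields form an absolute parallelism and hence generate $\mathfrak{X}(G)$ as a $C^\infty(G)$-module. Writing arbitrary vector fields as $C^\infty(G)$-combinations of a left-invariant frame and letting the coefficients pass outside $R_\nabla$ by tensoriality, the vanishing on left-invariant fields propagates to $R_\nabla=0$ identically, so $\mu$ is flat. There is no genuinely hard computational obstacle: the curvature formula \eqref{orthogonal4} (itself a consequence of the Jacobi identity applied to \eqref{orthogonal3}) does all the work, and the only subtleties are this tensoriality/frame argument and keeping the convention for ``$2$-nilpotent'' fixed as $[\mathfrak{g},\mathfrak{g}]$ being central, which is exactly what makes the equivalence hold.
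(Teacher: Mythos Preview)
Your proposal is correct and follows exactly the approach the paper intends: the proposition is stated as an immediate consequence of Identity \eqref{orthogonal4}, and you have simply spelled out that immediate consequence in full detail, including the (routine) tensoriality argument to pass from left-invariant frames to arbitrary vector fields. There is nothing to add or correct.
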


\begin{example} \emph{Semisimple  Lie groups.} Let $G$ be a semisimple Lie group. 
	It is well known that $G$ is a semisimple if and only if
	the \emph{Killing form} of $\mathfrak{g}$, 
	which we denote by $k\colon \mathfrak{g}\times\mathfrak{g}\to \mathbb{R}$,  
	defined by $(x,y)\mapsto k(x,y)=\text{tr}(\text{ad}_x\circ \text{ad}_y)$ 
	for $x,y\in \mathfrak{g}$, is non-degenerate. 
	Direct computation shows  
	$$k([x,y],z)=-k(y,[x,z]),$$
	for $ x,y,z\in \mathfrak{g}$. Therefore $(\mathfrak{g},k)$ is an orthogonal Lie algebra.
\end{example}
\begin{example}
	\emph{The cotangent bundle of a Lie group.} Let $G$ be a real connected $n$-dimensional Lie group, 
	$\mathfrak{g}$ its Lie algebra, $T^\ast G$ the cotangent bundle of $G$, 
	and $\mathfrak{g}^\ast$ the dual vector space of $\mathfrak{g}$. 
	Recall that $T^\ast G$ is isomorphic to the trivial bundle $G\times \mathfrak{g}^\ast$ 
	and this is endowed with a natural Lie group structure given by
	\begin{equation}\label{Liecotangent}
	(\sigma,\alpha)\cdot (\tau,\beta)=(\sigma\tau,\text{Ad}^\ast_\sigma(\beta)+\alpha),
	\end{equation}
	for all $\sigma,\tau\in G$ and $\alpha,\beta\in \mathfrak{g}^\ast$.
	Therefore, the Lie algebra of $T^\ast G$ is the product vector space 
	$\mathfrak{g}\ltimes_{ad^\ast} \mathfrak{g}^\ast$ with   Lie bracket
	\begin{equation}\label{Liealgebracotangent}
	[(x,\alpha),(y,\beta)]=([x,y]_\mathfrak{g},\text{ad}_x^\ast(\beta)-\text{ad}_y^\ast(\alpha)),
	\end{equation}
	for all $x,y\in\mathfrak{g}$ and $\alpha,\beta\in\mathfrak{g}^\ast$.
	
	We define  over $\mathfrak{g}\ltimes_{ad^\ast} \mathfrak{g}^\ast$ the function
	\begin{equation}\label{invariantcotangent}
	\mu_0((x,\alpha),(y,\beta))=\alpha(y)+\beta(x),
	\end{equation}
	for all $x,y\in\mathfrak{g}$ and $\alpha,\beta\in\mathfrak{g}^\ast$.
	It is easy to see that $\mu_0$ defines an invariant scalar product over 
	$\mathfrak{g}\ltimes_{ad^\ast} \mathfrak{g}^\ast$, of  signature $(n,n)$, so that
	$(\mathfrak{g}\ltimes_{ad^\ast} \mathfrak{g}^\ast,\mu_0)$ is an orthogonal Lie algebra.
\end{example}

\begin{example}
	\emph{Oscillator Lie group.} For $\lambda=(\lambda_1,\cdots,\lambda_n)\in\mathbb{R}^n$ 
	with  $0<\lambda_1\leq\cdots\leq\lambda_n$, the  $\lambda$-oscillator Lie group, 
	denoted by $G_\lambda$, is determined by the product  manifold  
	$\mathbb{R}^{2n+2}\cong \mathbb{R}\times\mathbb{R}\times \mathbb{C}^n$ endowed with the product
	$$(t,s,z_1,\cdots,z_n)\cdot(t',s',z'_1,\cdots,z'_n)=$$
	$$= \left(t+t',s+s'+\dfrac{1}{2}\sum_{j=1}^n \text{Im}(\overline{z_j}z'_je^{i\lambda_jt}),z_1+z'_1e^{i\lambda_1t},\cdots,z_n+z'_ne^{i\lambda_nt}\right),$$
	where $t,t',s,s'\in\mathbb{R}$ y $z_j,z'_j\in\mathbb{C}$ for all $j=1,\cdots, n$. 
	The  Lie algebra of  $G_\lambda$, denoted by  $\mathfrak{g}_\lambda$, 
	is isomorphic to the vector space 
	$\mathbb{R}e\times \mathbb{R}^{2n}\times\mathbb{R}\hat{e}=\text{Vect}_\mathbb{R}\lbrace e,e_j,\hat{e_j},\hat{e} \rbrace_{j=1,\cdots, n}$
	, with Lie bracket
	$$[e,e_j]=\lambda_j\hat{e_j},\hspace{0.5cm} [e,\hat{e_j}]=-\lambda_j e_j,\hspace{0.5cm} [e_j,\hat{e_j}]=\hat{e},$$
	for all $j=1,\cdots n$.
	
	If $\displaystyle x=\alpha e+\sum_{j=1}^nx_je_j+\sum_{j=1}^ny_j\hat{e_j}+\beta\hat{e}$ 
	denotes an element of $\mathfrak{g}_\lambda$, the function  $\mu_0$ defined over 
	$\mathfrak{g}_\lambda\times\mathfrak{g}_\lambda$ by
	$$\mu_0(x,y)=\sum_{j=1}^n\dfrac{1}{\lambda_j}(x_jx'_j+y_jy'_j)+\alpha\beta'+\alpha'\beta$$
	is an invariant scalar product over $\mathfrak{g}_\lambda$. This allow us to conclude that  
	$G_\lambda$ is an orthogonal Lie group. 
	The signature of the scalar product $\mu_0$ is $(1,2n+1)$ so that it determines,  
	by means of Formula  \eqref{pseudometricaSG},
	a bi-invariant  Lorentzian metric over $G_\lambda$.
	\begin{remark}
		The  $\lambda$-oscillators Lie groups are the only solvable simply connected 
		non-Abelian Lie  groups that  admit a bi-invariant Lorentzian metric (see \cite{Me2}).
		The oscillator 4-dimensional Lie group  has its origin in the study of the 
		harmonic oscillator which is one of the simplest non-relativistic  
		systems where the  Schr\"odinger equation can be completely solved. 
		Moreover, oscillator Lie groups  are particular  solutions to the Einstein-Yang-Mills  equations (see \cite{L}).
		Over oscillator Lie groups there exist infinitely many solutions to the Yang-Baxter equations (see \cite{BM}).
	\end{remark}
\end{example} 

\begin{example}
	\emph{ A non-orthogonal Lie group: $\text{Aff}(\mathbb{R})$.} 
	The Lie group of affine transformations of the line $\text{Aff}(\mathbb{R})$ 
	is a classical example of a non-orthogonal Lie Group. 
	If there were an invariant scalar product $\mu_0$ over $\mathfrak{aff}(\mathbb{R})$, we will get
	$$\mu_0([x,y],z)+\mu_0(y,[x,z])=0,$$
	for all $x,y,z\in\mathfrak{g}$. If we replace here $x=e_1$, $y=e_2$ and $z=e_1$, we obtain $\mu_0(e_1,e_2)=0$. 
	On the other hand, if we replace  $x=e_1$, $y=e_2$ and $z=e_2$ we get $\mu_0(e_2,e_2)=0$. 
	Therefore, the element $e_2$ is orthogonal with respect to $\mu_0$ to all
	elements of  $\mathfrak{aff}(\mathbb{R})$, which contradicts the fact that  $\mu_0$ is non-degenerate. 
	The bottom line is that there does not exist an invariant scalar product over $\mathfrak{aff}(\mathbb{R})$.
\end{example}

If $G$ is a compact Lie group, using the  Haar measure on  
$G$ we can construct a bi-invariant  Riemannian metric 
over $G$ (see \cite[p.\thinspace 340]{P}). 
In further generality, a connected Lie group  $G$ admits a bi-invariant  metric  
if and only if  it is isomorphic to the Cartesian product of  a compact group and an 
additive vector group (see \cite{M1}). 
On the other hand, if $(\mathfrak{g},\mu_0)$ is an orthogonal Lie algebra with   
an invariant inner product $\mu_0$ and  $\mathfrak{h}$ is an ideal of  $\mathfrak{g}$ 
for each $y$ in the orthogonal  complement $\mathfrak{h}^{\perp_{\mu_0}}$ of 
$\mathfrak{h}$ with  respect to  $\mu_0$, we have
$$\mu_0([x,y],h)=-\mu_0(y,[x,h])=0,$$
for $x\in\mathfrak{g}$ and $h\in\mathfrak{h}$. 
This implies that $\mathfrak{h}^{\perp_{\mu_0}}$ is also an ideal of $\mathfrak{g}$. 
Therefore, by induction, have shown that  $\mathfrak{g}$ 
can be expressed as an orthogonal direct sum of simple  ideals (see \cite{M1}).

\begin{remark}
	K. Iwasawa showed in \cite{I} that  if $G$ is a connected Lie group, 
	then every compact subgroup is contained in a maximal compact subgroup $H$, 
	which is also connected. Moreover, topologically $G$ is isomorphic to the 
	Cartesian product  of $H$ with  an Euclidean space $\mathbb{R}^k$.
	For $(G,\mu)$ a flat Riemannian  Lie group,
	if we ignore for a moment the group structure of $G$ and think of it just as a  
	Riemannian manifold, we have that $G$ is isometric to Euclidean space. 
	Therefore, as a consequence of Iwasawa's theorem, every compact subgroup of $(G,\mu)$ is commutative (see \cite{M1}).
\end{remark}

The following characterization of flat  Riemannian Lie groups is due to  J. Milnor (see \cite{M1}).
\begin{theorem}[Milnor]\label{MilnorflatTheorem}
	Let $(G,\mu)$ be a  Riemannian Lie group. 
	The metric $\mu$ is flat if and only if  the Lie algebra $\mathfrak{g}$ 
	decomposes as an orthogonal  direct sum $\mathfrak{b}\oplus\mathfrak{u}$, 
	where $\mathfrak{b}$ is an Abelian subalgebra and
	$\mathfrak{u}$ is an Abelian ideal such that the linear maps $\text{ad}_b$ are 
	antisymmetric with respect to  $\mu_\epsilon$ for all $b\in\mathfrak{b}$. 
\end{theorem}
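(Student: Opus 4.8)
\emph{Proof proposal.} The plan is to work entirely at the level of the Lie algebra $\mathfrak{g}=T_\epsilon G$, using the Levi-Civita product $x\cdot y=L_x(y)=(\nabla_{x^+}y^+)(\epsilon)$. Recall that for a left invariant pseudo-metric the curvature of $\nabla$ is also left invariant and satisfies $R_\nabla(x^+,y^+)z^+=\big(L_xL_yz-L_yL_xz-L_{[x,y]}z\big)^+$, so $\mu$ is flat if and only if $L_{[x,y]}=[L_x,L_y]_{\mathfrak{gl}(\mathfrak{g})}$, i.e. $L\colon\mathfrak{g}\to\mathfrak{o}(\mathfrak{g},\mu_\epsilon)$ is a Lie algebra homomorphism into the skew-symmetric operators (as in the Remark preceding Proposition \ref{ThmAuberMedina1}). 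I would establish the two implications separately.

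For the ``if'' direction, assume $\mathfrak{g}=\mathfrak{b}\oplus\mathfrak{u}$ orthogonally with the stated properties. I would compute the Levi-Civita product directly from the reduced Koszul formula \eqref{Levi-CivitaProduct}, using $[\mathfrak{b},\mathfrak{b}]=0$, $[\mathfrak{u},\mathfrak{u}]=0$, $[\mathfrak{b},\mathfrak{u}]\subseteq\mathfrak{u}$, the orthogonality $\mathfrak{b}\perp\mathfrak{u}$, and the skew-symmetry of $\operatorname{ad}_b$. A short case analysis on whether the arguments lie in $\mathfrak{b}$ or $\mathfrak{u}$ yields the explicit rules $L_b b'=0$, $L_b u=[b,u]$ and $L_u=0$ for $b,b'\in\mathfrak{b}$, $u\in\mathfrak{u}$. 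Plugging these into $R_\nabla(x^+,y^+)z^+=(L_xL_yz-L_yL_xz-L_{[x,y]}z)^+$ and splitting each of $x,y,z$ into $\mathfrak{b}$- and $\mathfrak{u}$-components, the only nonvanishing composites reduce, via the Jacobi identity in $\mathfrak{g}$, to $[[b,b'],u]=0$; hence $R_\nabla=0$ and $\mu$ is flat.

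For the ``only if'' direction I would set $\mathfrak{u}\ce\ker L=\{x\in\mathfrak{g}:L_x=0\}$. Since $L$ is a representation, $\mathfrak{u}$ is an ideal, and \eqref{productLevi-Civita1} gives $[u,u']=L_uu'-L_{u'}u=0$, so $\mathfrak{u}$ is an abelian ideal. For $u\in\mathfrak{u}$ one has $[x,u]=L_xu\in\mathfrak{u}$, so $\mathfrak{u}$ is $L$-invariant; as $L$ takes values in $\mathfrak{o}(\mathfrak{g},\mu_\epsilon)$, the orthogonal complement $\mathfrak{b}\ce\mathfrak{u}^{\perp}$ is $L$-invariant as well, and then $[b,b']=L_bb'-L_{b'}b\in\mathfrak{b}$ shows $\mathfrak{b}$ is a subalgebra. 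Because both $\mathfrak{u}$ and $\mathfrak{b}$ are $\operatorname{ad}_b$-invariant and mutually orthogonal, and $\operatorname{ad}_b|_{\mathfrak{u}}=L_b|_{\mathfrak{u}}$ is skew, the whole statement collapses to the single claim that $L_b|_{\mathfrak{b}}=0$ for all $b\in\mathfrak{b}$: this forces $\mathfrak{b}$ abelian and makes $\operatorname{ad}_b=L_b$ skew on all of $\mathfrak{g}$.

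The main obstacle is precisely this key claim. My approach is to exploit that any left invariant Riemannian metric is complete (the group acts transitively by isometries), so by Theorem \ref{unimodularcompletes} the group $G$ is unimodular, giving $\operatorname{tr}(\operatorname{ad}_x)=0$ and hence $\operatorname{tr}(R_x)=0$. Restricting the flat structure to the invariant subalgebra $\mathfrak{b}$ yields a self-contained flat Riemannian Lie algebra $(\mathfrak{b},\rho)$ with $\rho(b)=L_b|_{\mathfrak{b}}$ a skew representation and $[b,b']=\rho(b)b'-\rho(b')b$; unimodularity there forces $\sum_i\rho(e_i)e_i=0$ for any $\mu_\epsilon$-orthonormal basis $\{e_i\}$ of $\mathfrak{b}$. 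The delicate step is to convert this trace identity, together with the representation relation $\rho([b,b'])=[\rho(b),\rho(b')]$ and positive definiteness of $\mu_\epsilon$, into $\sum_i\|\rho(e_i)\|^2=0$; the cleanest way I expect to organize it is by induction on $\dim\mathfrak{g}$, showing $\ker L\neq 0$ for $\mathfrak{g}\neq 0$ and then that an abelian $\mathfrak{b}$ carrying a skew representation $\rho$ with $\rho(b)b'=\rho(b')b$ must have $\rho\equiv 0$ (a total-symmetry-versus-antisymmetry argument on $\Phi(b,b',b'')=\mu_\epsilon(\rho(b)b',b'')$). Once $L_b|_{\mathfrak{b}}=0$ is secured, the decomposition $\mathfrak{g}=\mathfrak{b}\oplus\mathfrak{u}$ has all the required properties and the proof closes.
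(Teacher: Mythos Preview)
Your ``if'' direction and the structural setup of the ``only if'' direction (defining $\mathfrak{u}=\ker L$, taking $\mathfrak{b}=\mathfrak{u}^{\perp}$, and showing $\mathfrak{b}$ is an $L$-invariant subalgebra) match the paper exactly. The divergence, and the gap, is in the key claim $L_b|_{\mathfrak{b}}=0$.

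Neither of your proposed routes closes. The identity $\sum_i\rho(e_i)e_i=0$ does follow from completeness and Theorem~\ref{unimodularcompletes} as you say, but it does \emph{not} yield $\sum_i\|\rho(e_i)\|^2=0$: already on $\mathfrak{g}=\mathbb{R}^3$ with $[e_1,e_2]=e_3$, $[e_1,e_3]=-e_2$ and the standard inner product one has $L_{e_1}=\operatorname{ad}_{e_1}\neq 0$, $L_{e_2}=L_{e_3}=0$, and still $\sum_i L_{e_i}e_i=0$. Your symmetry-versus-antisymmetry trick on $\Phi(b,b',b'')=\mu_\epsilon(\rho(b)b',b'')$ is correct \emph{as stated}, but its hypothesis $\rho(b)b'=\rho(b')b$ is precisely that $\mathfrak{b}$ is abelian, which is what you are trying to prove. (It does show the weaker fact $\mathfrak{z}(\mathfrak{g})\subseteq\ker L$.) Finally, the induction does not assemble: applying the inductive hypothesis to $(\mathfrak{b},\rho)$ gives $\mathfrak{b}=\mathfrak{b}_1\oplus\mathfrak{u}_1$, but $\mathfrak{u}_1\oplus\mathfrak{u}$ need not be abelian in $\mathfrak{g}$, since for $u_1\in\mathfrak{u}_1$ one only knows $L_{u_1}|_{\mathfrak{b}}=0$, not $L_{u_1}|_{\mathfrak{u}}=0$; and to start the induction you still need $\ker L\neq 0$, which the trace conditions alone do not force.

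The paper handles this step by a different mechanism that uses positive definiteness in an essential way. Since $\mu_\epsilon$ is an inner product, $O(\mathfrak{g},\mu_\epsilon)$ is \emph{compact}, so $\mathfrak{o}(\mathfrak{g},\mu_\epsilon)$ carries an invariant inner product which restricts to the subalgebra $L(\mathfrak{b})\cong\mathfrak{b}$. Hence $\mathfrak{b}$ is an orthogonal Lie algebra with positive definite invariant form and splits as an orthogonal sum of simple ideals. If one of these were non-abelian, it would integrate to a non-abelian compact subgroup of $G$; but the Remark preceding the theorem (a flat Riemannian Lie group is isometric to Euclidean space, so by Iwasawa every compact subgroup is commutative) rules this out. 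This compactness-plus-Iwasawa input is exactly what is missing from your outline and is not recoverable from unimodularity alone.
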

\begin{proof}
	Suppose that $(G,\mu)$ is a flat  Riemannian Lie group. 
	If $\nabla$ is the Levi-Civita connection associated to  $\mu$, then by 
	Proposition \ref{ThmAuberMedina1}, we know that the linear map 
	$L\colon \mathfrak{g}\to\mathfrak{o}(\mathfrak{g},\mu_\epsilon)$ 
	defined by $x\mapsto L_x$, where $L_x(y)=x\cdot y=(\nabla_{x^+}y^+)(\epsilon)$ 
	for all $x,y\in\mathfrak{g}$, is a well defined  Lie algebra homomorphism. 
	We denote by  $\mathfrak{u}$ the kernel of $L$. 
	Clearly $\mathfrak{u}$ is an ideal of $\mathfrak{g}$. 
	Since the torsion tensor of  $\nabla$ vanishes, we have $[x,y]=L_x(y)-L_y(x)$ for all 
	$x,y\in\mathfrak{g}$. In particular, for $u,v\in\mathfrak{u}$ we have $[u,v]=0$ 
	and it follows that $\mathfrak{u}$ is an Abelian ideal. Let $\mathfrak{b}$ be the orthogonal 
	complement of  $\mathfrak{u}$ with  respect to $\mu_\epsilon$. 
	For each $b\in\mathfrak{b}$ we have the identity
	$$\text{ad}_b(u)=[b,u]=L_b(u)-L_u(b)=L_b(u),$$
	for $u\in\mathfrak{u}$. Given that  $\mathfrak{u}$ is an ideal of $\mathfrak{g}$, 
	the linear map  $L_b$ takes $\mathfrak{u}$ onto itself. 
	Therefore, $L_b$ takes the orthogonal complement  $\mathfrak{b}$ to itself, 
	and  since this is true  for all $b\in\mathfrak{b}$, 
	we conclude that $\mathfrak{b}$ is a  Lie  subalgebra of $\mathfrak{g}$. 
	On the other hand, since $L$ is a Lie algebra homomorphism and 
	$\mathfrak{u}=\text{Ker}(L)$, we have that $\mathfrak{b}$ is sent isomorphically  
	to a Lie subalgebra $L(\mathfrak{b})$ of $\mathfrak{o}(\mathfrak{g},\mu_\epsilon)$. 
	For simplicity, we  denote  $L(\mathfrak{b})$ also  by $\mathfrak{b}$. 
	Given that  $\mathfrak{o}(\mathfrak{g},\mu_\epsilon)$ is the Lie algebra of the  
	compact Lie group $O(\mathfrak{g},\mu_\epsilon)$, which admits a bi-invariant  
	Riemannian metric, we deduce the existence of   an invariant inner product  
	$\mu_0$ over $\mathfrak{o}(\mathfrak{g},\mu_\epsilon)$. 
	Since $\mathfrak{b}$ is a  Lie subalgebra of $\mathfrak{o}(\mathfrak{g},\mu_\epsilon)$, 
	it is easy to verify that $\mu_0$ restricts  naturally  to an invariant inner  product over 
	$\mathfrak{b}$. Therefore, $\mathfrak{b}$ can be written as  an  orthogonal direct sum  
	$\mathfrak{b}_1\oplus \cdots\oplus \mathfrak{b}_k$ of  simple ideals. 
	If any of these  simple ideal, say $\mathfrak{b}_j$, were non Abelian, 
	then the  corresponding simple Lie group  
	$G_j$ must be compact (see \cite[Thm 2.2]{M1}) and the inclusion 
	$\mathfrak{b}_j\subset\mathfrak{b}\subset\mathfrak{g}$ 
	would imply the existence of a nontrivial Lie group homomorphism  $G_j\to G$. 
	Hence, $G$ must contain a non-trivial compact subgroup,  which is a  contradiction. 
	Therefore, each  $\mathfrak{b}_j$ must be Abelian and accordingly  
	$\mathfrak{b}$ is an Abelian Lie subalgebra.
	Finally, since for each $b\in\mathfrak{b}$ the restriction of  
	$\text{ad}_b$ to $\mathfrak{b}$ is the  trivial map, whereas
	we have $\text{ad}_b=L_b$ when restricting $\text{ad}_b$ to $\mathfrak{u}$, 
	we obtain $\text{ad}_b\in\mathfrak{o}(\mathfrak{g},\mu_\epsilon)$ for all $b\in\mathfrak{b}$.
	
	Reciprocally, suppose that the Lie algebra $\mathfrak{g}$  decomposes as an   
	orthogonal direct sum $\mathfrak{b}\oplus\mathfrak{u}$, 
	where $\mathfrak{b}$ is an Abelian subalgebra and  $\mathfrak{u}$ is an 
	Abelian ideal such that  $\text{ad}_b\in\mathfrak{o}(\mathfrak{g},\mu_\epsilon)$ for all $b\in\mathfrak{b}$.
	As $\mu_\epsilon$ is nondegenerate, the Koszul formula reduced to the identity \eqref{Levi-CivitaProduct} and
	both formulas \eqref{productLevi-Civita1} and \eqref{productLevi-Civita1}
	imply that the Levi-Civita product associated to $\mu_\epsilon$ satisfies the identities
	$$L_u=0,\qquad L_b=\text{ad}_b,$$ 
	for all $u\in\mathfrak{u}$ and $b\in\mathfrak{b}$.
	It is easy to verify that this implies  $L_{[x,y]}=[L_x,L_y]_{\mathfrak{gl}(\mathfrak{g})}$ 
	for all $x,y\in\mathfrak{g}$. Therefore, by  Proposition \ref{ThmAuberMedina1} 
	we have that $\mu$ is a left-invariant flat Riemannian metric. 
\end{proof}

\begin{example}
	\emph{$\text{Aff}(\mathbb{R})$ does not admit a left-invariant flat Riemannian metric.} 
	Recall that the Lie  Algebra of
	$\text{Aff}(\mathbb{R})$ is $\mathfrak{aff}(\mathbb{R})=\text{Vect}_\mathbb{R}\lbrace e_1,e_2\rbrace$ 
	with Lie bracket $[e_1,e_2]=e_2$. 
	Suppose that $\text{Aff}(\mathbb{R})$ admits a left-invariant flat Riemannian metric $\mu$. 
	Let  $\nabla$ be the Levi-Civita 
	connection associated to  $\mu$ and  $L_x(y)=(\nabla_{x^+}y^+)(\epsilon)$ the  
	Levi-Civita product determined by  $\nabla$. By Theorem \ref{MilnorflatTheorem} 
	$\mathfrak{aff}(\mathbb{R})$ decomposes as an orthogonal direct sum   
	$\mathfrak{b}\oplus \mathfrak{u}$ where  $\mathfrak{u}=\text{Ker}(L)$ is an Abelian ideal of  
	$\mathfrak{aff}(\mathbb{R})$ and $\mathfrak{b}$ is an Abelian subalgebra of 
	$\mathfrak{aff}(\mathbb{R})$ such that  
	$\text{ad}_b\in\mathfrak{o}(\mathfrak{aff}(\mathbb{R}),\mu_\epsilon)$ for all  
	$b\in\mathfrak{b}$. Given these conditions, it is clear that we have  
	$\mathfrak{u}=\mathbb{R}e_2$ and $\mathfrak{b}=\mathbb{R}e_1$. 
	Therefore, from $\text{ad}_{e_1}\in\mathfrak{o}(\mathfrak{aff}(\mathbb{R}),\mu_\epsilon)$ 
	we get $\mu_\epsilon(e_1,e_2)=0$, and since 
	$$e_2=[e_1,e_2]=L_{e_1}(e_2)-L_{e_2}(e_1)=L_{e_1}(e_2),$$
	the condition $L_{e_1}\in\mathfrak{o}(\mathfrak{aff}(\mathbb{R}),\mu_\epsilon)$ 
	implies $\mu_\epsilon(e_2,e_2)=0$. 
	Consequently,   $e_2$ is  orthogonal to every element of  
	$\mathfrak{aff}(\mathbb{R})$ with respect to  $\mu_\epsilon$, which  
	contradicts the fact that  $\mu_\epsilon$ is non-degenerate.
\end{example}

\section{The double  orthogonal extension}
In what follows we describe a construction method  
known by the name of  {\bf double orthogonal extension} which is due to   
A. Medina and Ph. Revoy (compare \cite{MR}). This method  provides, among other things, 
a way to construct all finite-dimensional orthogonal Lie  algebras. 
As an application of the double orthogonal extension we indicate how to construct  the 
Lie algebra of the   $\lambda$-oscillator Lie group.

Given an  orthogonal Lie algebra $(\mathfrak{g},\mu_0)$, the space of  
{\bf skew - symmetric derivations} of  $\mathfrak{g}$ with  respect to  $\mu_0$, 
denoted by  $\text{Der}_a(\mathfrak{g},\mu)$, is defined as the set of 
derivations $D\colon \mathfrak{g}\to\mathfrak{g}$ that verify 
$\mu_0(D(x),y)=-\mu_0(x,D(y))$ for all $x,y\in\mathfrak{g}$. 
It is easy to check that  $\text{Der}_a(\mathfrak{g},\mu)$ is a  
Lie subalgebra of  $\text{Der}(\mathfrak{g})$. 
Suppose that there exists a Lie algebra homomorphism 
$\psi\colon \mathfrak{h}\to \text{Der}_a(\mathfrak{g},\mu)$ for some  Lie algebra 
$\mathfrak{h}$. Define  the map 
$\Phi\colon \mathfrak{g}\times\mathfrak{g} \to \mathfrak{h}^\ast$ by $\Phi(x,y)(z)=\mu_0(\psi_z(x),y)$ 
for $x,y\in \mathfrak{g}$ and $z\in\mathfrak{h}$. 
Such a map is clearly  bilinear. 
Moreover, since $\psi_z\in \text{Der}_a(\mathfrak{g},\mu)$ for all $z\in\mathfrak{h}$, 
we have that $\Phi$ is skew-symmetric and satisfies
\begin{equation}\label{orthogonal5}
\Phi([x,y],w)+\Phi([y,w],x)+\Phi([w,x],y)=0,
\end{equation}
for all $x,y,w\in\mathfrak{g}$.

The properties of  $\Phi$ together with identity \eqref{orthogonal5} 
tell us that $\Phi$ defines a $2$-cocycle of the Lie algebra $\mathfrak{g}$ with 
values in the vector space $\mathfrak{h}^\ast$ with  respect to the trivial representation of  
$\mathfrak{g}$ by $\mathfrak{h}^\ast$ (see \cite{CE}). 
Therefore, the product vector space $\mathfrak{g}_\Phi=\mathfrak{g}\times_{\Phi}\mathfrak{h}^\ast$ 
is a Lie algebra with Lie bracket given by 
$$[(x,\alpha),(y,\beta)]_c=([x,y]_{\mathfrak{g}},\Phi(x,y)),$$
for all $x,y\in\mathfrak{g}$ and $\alpha,\beta\in\mathfrak{h}^\ast$.
In what follows we denote by 
$\pi^\ast\colon \mathfrak{h}\to \mathfrak{gl}(\mathfrak{h}^\ast)$ 
the co-adjoint representation of $\mathfrak{h}$. 
For each $z\in\mathfrak{h}$, we define the map 
$\Theta_z\colon \mathfrak{g}\times_{\Phi}\mathfrak{h}^\ast\to \mathfrak{g}\times_{\Phi}\mathfrak{h}^\ast$ by 
$(x,\alpha) \mapsto (\psi_z(x),\pi_z^\ast(\alpha))$ 
for all $x\in\mathfrak{g}$ and $\alpha\in \mathfrak{h}^\ast$. 
Since $\psi\colon \mathfrak{h}\to \text{Der}_a(\mathfrak{g},\mu)$ is a Lie algebra homomorphism, it  is easy to verify they satisfy 
\begin{equation}\label{orthogonal6}
\pi_z^\ast(\Phi(x,y))=\Phi(\psi_z(x),y)+\Phi(x,\psi_z(y)),
\end{equation}
for all $x,y\in\mathfrak{g}$ and $z\in\mathfrak{h}$.
Formula \eqref{orthogonal6} aids us to show that $\Theta_z$ 
is a derivation of the  Lie algebra $(\mathfrak{g}\times_{\Phi}\mathfrak{h}^\ast,[\cdot ,\cdot ]_c)$ 
for each $z\in\mathfrak{h}$, namely, we get
$$\Theta_z([(x,\alpha),(y,\beta)]_c)=[\Theta_z(x,\alpha),(y,\beta)]_c+[(x,\alpha),\Theta_z(y,\beta)]_c,$$
for all $x,y\in\mathfrak{g}$ and $\alpha,\beta\in\mathfrak{h}^\ast$. 
Therefore, the map 
$\Theta\colon \mathfrak{h} \to \text{Der}(\mathfrak{g}\times_{\Phi}\mathfrak{h}^\ast,[\cdot ,\cdot]_c)$, 
defined by $z\mapsto \Theta_z$, is a well behaved Lie algebra homomorphism. 
The vector space 
$\widetilde{\mathfrak{g}}\colon =\mathfrak{h}\ltimes_\Theta(\mathfrak{g}\times_{\Phi}\mathfrak{h}^\ast)$ 
has the structure of a Lie algebra given by the  semidirect product of  
$\mathfrak{h}$ with  $\mathfrak{g}\oplus_{\Phi}\mathfrak{h}^\ast$ through the Lie 
algebra homomorphism $\Theta$; in other words, 
the Lie bracket on $\widetilde{\mathfrak{g}}$  is given explicitely by
$$[(z,x,\alpha),(z',y,\beta)]=([z,z']_\mathfrak{h},\psi_z(y)-\psi_{z'}(x)+[x,y]_\mathfrak{g},$$
\begin{equation}
\hskip 2.2in \pi_z^\ast(\beta)-\pi_{z'}^\ast(\alpha)+\Phi(x,y)),
\end{equation}
for all $x,y\in\mathfrak{g}$, $z,z'\in\mathfrak{h}$ and $\alpha,\beta\in\mathfrak{h}^\ast$. 
Finally, over $\widetilde{\mathfrak{g}}\times\widetilde{\mathfrak{g}}$ 
we define the function  $\widetilde{\mu_0}$ as
\begin{equation}\label{produc1}
\widetilde{\mu_0}((z,x,\alpha),(z',y,\beta))=\mu_0(x,y)+\alpha(z')+\beta(z),
\end{equation}
for all $x,y\in\mathfrak{g}$, $z,z'\in\mathfrak{h}$ and $\alpha,\beta\in\mathfrak{h}^\ast$. 
Since $\mu_0$ is an invariant scalar product over $\mathfrak{g}$, 
a direct calculation shows that  $\widetilde{\mu_0}$ 
is an invariant scalar product on $\widetilde{\mathfrak{g}}$ so that  
$(\widetilde{\mathfrak{g}},\widetilde{\mu_0})$ is an orthogonal 
Lie algebra called the {\bf double orthogonal extension} of  
$(\mathfrak{g},\mu_0)$ by  $\mathfrak{h}$ via $\psi$.
\begin{remark}
	If the signature of the  invariant scalar product $\mu_0$ is $(p,q)$, 
	then the signature of  $\widetilde{\mu_0}$ is $(p+\text{dim}(\mathfrak{h}),q+\text{dim}(\mathfrak{h}))$.
\end{remark}
\begin{example}
	\emph{The cotangent bundle of a Lie group.} 
	If in the method of  double orthogonal extension we set
	$\mathfrak{g}=\lbrace 0\rbrace$, it is easy to see that we get 
	$\widetilde{\mathfrak{g}}=\mathfrak{h}\ltimes_{\pi^\ast}\mathfrak{h}^\ast$ with 
	Lie bracket given by  \eqref{Liealgebracotangent} and 
	$\widetilde{\mu_0}((x,\alpha),(y,\beta))=\alpha(y)+\beta(x)$ for all $x,y\in\mathfrak{h}$ 
	and $\alpha,\beta\in\mathfrak{h}^\ast$. 
	Therefore, the orthogonal Lie algebra obtained  is the Lie algebra of the cotangent bundle of the 
	connected and simply connected Lie group  $H$ with  Lie algebra $\mathfrak{h}$.
\end{example}
\begin{example}
	\emph{The Lie  algebra of the  $\lambda$-oscillator Lie group.} 
	Let $\mathfrak{g}=\mathbb{R}^{2n}$ be considered as an Abelian  Lie algebra and 
	$\mu_0=\langle\cdot,\cdot\rangle$ the usual inner product on $\mathbb{R}^{2n}$. 
	Clearly $(\mathbb{R}^{2n},\mu_0)$ is an orthogonal Lie algebra. 
	We define the linear map  
	$\delta\colon \mathbb{R}^{2n}\to \mathbb{R}^{2n}$ by 
	$$x=(x^1,\cdots, x^{2n}) \mapsto (-x^{n+1},\cdots,-x^{2n},x_1,\cdots,x_n)$$ 
	which satisfies
	$$\mu_0(\delta(x),y)=-\sum_{j=1}^nx_{j+n}y_j+\sum_{j=1}^nx_jy_{j+n}=-\mu_0(x,\delta(y)),$$
	for all $x,y\in\mathbb{R}^{2n}$.
	If $\mathfrak{h}=\mathbb{R}e$ is a unidimensional Lie algebra, 
	then the map $\psi\colon \mathbb{R}e \to \text{Der}_a(\mathbb{R}^{2n},\mu_0)$ 
	defined by $te \mapsto \psi(te)=t\delta$, is a well defined Lie algebra homomorphism. 
	Direct calculation shows that  
	$\mathbb{R}^{2n}_\Phi=(\mathbb{R}^{2n}\times_\Phi\mathbb{R}e^\ast,[\cdot,\cdot]_c)$ 
	is isomorphic to the  Heisenberg Lie algebra of  dimension  $2n+1$ and that $\widetilde{\mathbb{R}^{2n}}=\mathbb{R}e\ltimes_\Theta(\mathbb{R}^{2n}\times_\Phi\mathbb{R}e^\ast)$ 
	is the Lie algebra with  bracket
	$$[e,e_j]=\delta(e_j)=\hat{e_j},\qquad [e,\hat{e_j}]=\delta(\hat{e_j})=-e_j,\qquad [e_j,\hat{e_j}]=e^\ast,$$
	for all $j=1,\cdots,n$.
	The invariant product  $\widetilde{\mu_0}$ defined by $\widetilde{\mathbb{R}^{2n}}$ is given by 
	$$\widetilde{\mu_0}(\gamma e+x+\alpha e^\ast,\gamma'e+y+\beta e^\ast)=\mu_0(x,y)+\alpha\gamma'+\beta\gamma,$$
	for all $x,y\in\mathbb{R}^{2n}$ and $\alpha,\beta,\gamma,\gamma'\in\mathbb{R}$. 
	The orthogonal Lie algebra $(\widetilde{\mathbb{R}^{2n}},\widetilde{\mu_0})$ 
	is isomorphic to the   $\lambda$-oscillator Lie algebra with  $\lambda_j=1$ for all $j=1,\cdots,n$. 
	
	A slight modification of this construction allows us to obtain the  
	Lie algebra $\mathfrak{g}_\lambda$ for
	$\lambda=(\lambda_1,\cdots,\lambda_n)\in\mathbb{R}^n$ with  arbitrary $0<\lambda_1\leq\cdots\leq\lambda_n$.
\end{example}

\begin{remark}
	A. Medina and Ph. Revoy  proved in \cite{MR}   that one can inductively produce 
	all orthogonal Lie algebras starting out with simple and unidimensional ones 
	by taking direct sums and double extensions. 
	More precisely, let $\mathfrak{g}$ be an indecomposable orthogonal Lie algebra, 
	that is, an orthogonal Lie algebra that cannot be written as the direct sum of two non-trivial orthogonal Lie algebras. 
	Then either $\mathfrak{g}$ is simple, or $\mathfrak{g}$ is unidimensional, 
	or else $\mathfrak{g}$ is a double extension of an orthogonal Lie algebra 
	$\widetilde{\mathfrak{g}}$ by a unidimensional or a simple Lie algebra $\mathfrak{h}$.  
	As an application, it is possible to show that any indecomposable non-simple 
	Lie algebra of a Lorentzian Lie group with dimension greater than $1$ is the 
	double orthogonal extension of an Abelian Lie algebra with  inner product by a unidimensional Lie algebra. 
	This provides a classification of Lorentzian orthogonal Lie algebras up to isomorphism (see \cite{Me2}).
\end{remark}

\section*{\bf Acknowledgements}
I  am grateful for the support of the Network NT8 from the Office of External Activities of 
Abdus Salam International 
Centre for Theoretical Physics, Italy,
that made  possible my visit to
Universidad Cat\'olica del Norte in Chile. 
I also wish to express my sincere gratitude to
Elizabeth Gasparim who coordinated the realization of the Summer School and 
of this notes. She wrote the English version of these lecture notes.  Finally, I thank Omar Saldarriaga, Alberto Medina, and Alfredo Poirier for their collaboration and valuable comments which facilitated the writing.

 	\end{document}